\documentclass{amsart}
\usepackage{amssymb}
\usepackage[normalem]{ulem}
\usepackage{multirow}

\usepackage{url}
\usepackage{hyperref}
\frenchspacing

\newtheorem{theorem}{Theorem}

\newtheorem{conj}[theorem]{Conjecture}
\newtheorem{prop}[theorem]{Proposition}
\newtheorem{cor}[theorem]{Corollary}
\newtheorem{lemma}[theorem]{Lemma}

\mathchardef\myhyphen="2D

\newcommand{\FF}{\mathbb{F}}
\newcommand{\QQ}{\mathbb{Q}}
\newcommand{\TT}{\mathbb{T}}
\newcommand{\ZZ}{\mathbb{Z}}
\newcommand{\mm}{\mathfrak{m}}
\newcommand{\pp}{\mathfrak{p}}
\newcommand{\qq}{\mathfrak{q}}
\newcommand{\aaa}{\mathfrak{a}}

\newcommand{\OO}{\mathcal{O}}
\newcommand{\PP}{\mathcal{P}}

\newcommand{\barr}[1]{\overline{#1}}
\newcommand{\FFbar}{\barr{\FF}}
\newcommand{\FFtbar}{\FFbar_2}
\newcommand{\QQbar}{\barr{\QQ}}
\newcommand{\odd}{{\rm odd}}
\newcommand{\even}{{\rm even}}

\newcommand{\onto}{\twoheadrightarrow}
\newcommand{\into}{\hookrightarrow}

\DeclareMathOperator{\Gal}{Gal}

\DeclareMathOperator{\GL}{GL}
\DeclareMathOperator{\PGL}{PGL}
\DeclareMathOperator{\SL}{SL}
\DeclareMathOperator{\PSL}{PSL}
\DeclareMathOperator{\Ind}{Ind}
\DeclareMathOperator{\frob}{Frob}

\DeclareMathOperator{\Trace}{Tr}
\DeclareMathOperator{\cond}{cond}

\newcommand{\new}{{\rm new}}
\newcommand{\ord}{{\rm ord}}
\newenvironment{psmallmatrix}
	{\left( \begin{smallmatrix}}
	{\end{smallmatrix} \right)}
\DeclareMathOperator{\cl}{Cl}

\newcommand{\twosplit}{{\rm 2\myhyphen split}}
\newcommand{\CC}{\mathbb C}
\newcommand{\eps}{\varepsilon}
\newcommand{\tr}{{\rm tr}}

\newcommand{\KK}[1]{\QQ(\sqrt{#1})}
\newcommand{\KNp}{\KK{N}}
\newcommand{\KNm}{\KK{-N}}
\newcommand{\KNpm}{\KK{\pm N}}

\newcommand{\cmod}[1]{\ \mathrm{mod}\ #1}

\title[Mod-2 dihedral Galois representations]{Mod-2 dihedral Galois representations of prime conductor}

\author{Kiran S. Kedlaya}
\address{Univ. of California, San Diego, 9500 Gilman Drive \#0112, La Jolla, CA 92093 USA}
\email{kedlaya@ucsd.edu}
\urladdr{\url{http://kskedlaya.org/}}

\author{Anna Medvedovsky}
\address{Department of Mathematics and Statistics, Boston University, 111 Cummington Mall, Boston, MA 02215 USA}
\email{medved@gmail.com}

\thanks{The first author was supported by NSF (grant DMS-1501214) and UC San Diego (Warschawski Professorship). The second author was supported by an NSF postdoctoral research fellowship (grant DMS-1703834) and has gratefully enjoyed the hospitality of the Max Planck Institute for Mathematics during the writing of this paper.}

\begin{document}

\begin{abstract}
For all odd primes $N$ up to $500000$, we compute the action of the Hecke operator $T_2$ on the space $S_2(\Gamma_0(N), \QQ)$ and determine whether or not the reduction mod 2 (with respect to a suitable basis) has 0 and/or 1 as eigenvalues. We then partially
explain the results in terms of class field theory and modular mod-2 Galois representations.
As a byproduct, we obtain some nonexistence results on elliptic curves and modular forms with certain mod-2 reductions, extending prior results of Setzer, Hadano, and Kida.
\end{abstract}

\maketitle

\section{Introduction}

\subsection{Computations and theorems}

For $N$ a positive integer and $k$ a positive even integer, let
$S_k(\Gamma_0(N), \QQ)$ be the space of weight-$k$ rational cusp forms for the group $\Gamma_0(N)$,
equipped with the Hecke operators $T_p$ for all primes $p$ not dividing $N$.
For $N$ prime with $2 < N < 500000$, 
we computed the matrix of $T_2$ acting on some basis of $S_2(\Gamma_0(N), \QQ)$;
this was done using Cremona's implementation of modular symbols, as documented in \cite{cremona},
via the \texttt{eclib} package in \texttt{Sage} \cite{sage}. 
We then used the \texttt{m4ri} package in \texttt{Sage}, which implements the
``method of four Russians'' \cite[Chapter~9]{bard}, to compute the rank of the reductions of $T_2$ and $T_2 - 1$ mod 2. These computations took a few CPU-months; we did not make an accurate costing because our method is almost certainly not optimal (see below). 

From this data, we observed the following behavior of the mod-2 matrix of $T_2$.
\begin{itemize}
\item For $N \equiv 3 \cmod{8}$, the eigenvalue $0$ always occurs if $N > 3$.
\item For $N \equiv 1,3,5 \cmod{8}$, the eigenvalue 1 always occurs if $N > 163$.
\item For $N \equiv 1 \cmod{8}$, the eigenvalue $0$ occurs with probability $16.8\%$.
\item For $N \equiv 5 \cmod{8}$, the eigenvalue $0$ occurs with probability $42.2\%$.
\item For $N \equiv 7 \cmod{8}$, the eigenvalue $0$ occurs with probability $17.3\%$.
\item For $N \equiv 7 \cmod{8}$, the eigenvalue $1$ occurs with probability $47.9\%$.
\end{itemize}
These results can be partially explained (see section~\ref{conclusions}) by combining the Cohen-Lenstra heuristics \cite{cohen-lenstra} with a detailed count of the maximal ideals of the mod-2 Hecke algebra with residue field $\FF_2$. The bulk of the paper is devoted to making these counts (Theorems \ref{ec2} and \ref{mf2}) using class field theory plus  the theory of modular Galois representations. As a byproduct, we recover some nonexistence results of Setzer \cite{setzer}, Hadano \cite{hadano}, and Kida \cite{kida} for elliptic curves of conductor $N$ or $2N$ with $N$ prime,
derived using a totally different approach: a diophantine analysis of discriminants of Weierstrass equations due to Ogg \cite{ogg}.

For $N < 200000$, we also computed the multiplicities of 0 and 1 as generalized eigenvalues of the mod-2 reduction of the matrix of $T_2$. (These multiplicities are independent of the choice of basis.)
These are somewhat more complicated to analyze because the self-adjointness of $T_p$ with respect to the Petersson inner product does not guarantee diagonalizability mod $\ell$;
hence the computed multiplicity is an upper bound for the count of maximal ideals, and either both are zero or both are nonzero, but more work is needed to explain the full multiplicity.
See Conjecture \ref{mfmult} for a step in this direction; existing work on failure of multiplicity one in characteristic 2 (e.g., \cite{kilford-wiese}) suggests that even conjecturally, it may be difficult to formulate a more precise conjecture without allowing for some sporadic exceptions.

\subsection{Motivation: tabulation of rational eigenforms}
\label{subsec:motivation}

Although these results may be of independent interest, for context we indicate how they were motivated by some considerations around the tabulation of rational eigenforms.
Via the modularity theorem,  isogeny classes of elliptic curves of conductor $N$ 
correspond to rational newforms in $S_2(\Gamma_0(N), \QQ)$; finding rational eigenforms 
within $S_2(\Gamma_0(N), \QQ)$ is the rate-limiting step in Cremona's algorithm for tabulating rational elliptic curves of a given conductor, as documented in \cite{cremona} and  executed to date for $N \leq 400000$ \cite{lmfdb}.
(The table is also available in \texttt{PARI/GP} \cite{pari}, \texttt{Magma} \cite{magma}, and \texttt{Sage}.)

Within this step of Cremona's algorithm, the rate-limiting substep is the computation of the kernel of $T_p - a_p$
where $p$ is the smallest prime not dividing $N$ and $a_p$ runs over all integers with $\left| a_p \right| \leq 2\sqrt{p}$. Once this step is done, the resulting kernels are typically of much smaller dimension than the original space, so it is of negligible difficulty to diagonalize the restrictions of enough additional Hecke operators to isolate all one-dimensional joint eigenspaces. (The fact that this catches all rational eigenforms is a consequence of self-adjointness and strong multiplicity one.)

Recall that linear algebra over $\QQ$ is not generally performed using generic algorithms due to intermediate coefficient explosion; it is better to use a multimodular approach in which one does linear algebra over
$\FF_\ell$ for various small primes $\ell$ and reconstructs the final answer using the Chinese remainder theorem.
In Cremona's implementation of his algorithm, he uses only the single prime $\ell = 2^{30} - 35$; to date, this has provided enough information to identify the kernel of $T_p-a_p$.

The present work was motivated by a desire to understand the following question: to what extent (if any) can 
this algorithm be accelerated using linear algebra over $\FF_\ell$ for a single small $\ell$, such as $\ell=2$?
Of course, one does not expect the result of computing the kernel of $T_p - a_p$ mod $\ell$ to provide enough information to identify the kernel over $\QQ$. However, for $N$ large, the probability that $S_2(\Gamma_0(N), \QQ)$ admits \empty{any} rational newforms is relatively small: by analogy with the corresponding estimate for elliptic curves sorted by na\"\i ve height \cite{brumer} or Faltings height \cite{hortsch}, one expects that only $O(X^{5/6})$ of positive integers up to $X$ occur as levels of rational newforms. Consequently, there are likely to be many values of $N$ for which $T_p - a_p$ has no kernel at all over $\QQ$; if this remains true mod $\ell$, then finding this out would provide an early abort mechanism. A more sophisticated early abort strategy would be to
calculate not the rank of $T_p - a_p$, but rather
\begin{gather*}
\mbox{(contribution from level $N$ newforms)} \\
=
\mbox{(eigenvalue multiplicity of 0)}
- \sum_{d<N, d|N} \tau(N/d)
\mbox{(contribution from level $d$ newforms)}
\end{gather*}
where $\tau(n)$ is the number of divisors of $n$; an early abort occurs if this contribution does not increase under reduction modulo $\ell$.

The restriction to $N$ prime in this paper was made for several reasons; notably, a key role in the theoretical analysis is played by Eisenstein ideals, which are well understood for $N$ prime by the work of Mazur \cite{mazur} but remain largely mysterious for general $N$ (but still tractable for squarefree $N$, as in the work of Yoo \cite{yoo}). However, for $N$ prime there is no need to optimize Cremona's method: the method used by Bennett--Rechnitzer \cite{bennett-rechnitzer} to extend the tables of Stein--Watkins \cite{stein-watkins} is sufficient to compute (rigorously) a table of elliptic curves of all prime conductors up to $10^{10}$. Nonetheless, we hope that a thorough understanding of the present situation will provide a blueprint for extending the analysis; see below.

\subsection{Additional questions}
\label{subsec:future directions}

We conclude this introduction with discussion of further work to be done in this direction.
To begin with, our final analysis of the experimental data remains somewhat incomplete
because our analysis of mod-2 Galois representations focuses on the ones with dihedral image;
while representations with larger image are somewhat rarer, they do appear to make measurable contributions which we would like to see quantified.

In addition, one could repeat the analysis in alternate situations: one could treat nonprime $N$, work modulo another prime $\ell$, consider $T_p$ for another $p$,
and/or work in some higher weight $k$.
While all of these variants are of intrinsic interest, we would like to point out some developments in the computation of modular forms which  draw attention to some particular cases.
(Separately, the case of $N$ prime, $\ell = 2$, $p>2$, $k=2$ has arisen in the context of error-correcting codes \cite{randriam}.)

We first reconsider our choice of method to compute the Hecke actions on $S_k(\Gamma_0(N), \QQ)$.
The method of modular symbols is implemented in \texttt{Magma} \cite{magma} and \texttt{Sage} \cite{sage}, and in a specially optimized form for $k=2$ in Cremona's \texttt{eclib}. The approach used in \texttt{PARI/GP} \cite{pari} is based on trace formulas. However, for a large-scale tabulation of rational eigenforms, we believe the best approach is the method of \cite{birch} as extended by Hein--Tornar\'ia--Voight \cite{hein} (see also \cite{voight-agct}). Birch's original method is a variant of the Mestre--Oesterl\'e method of graphs \cite{mestre} in the case where $k=2$ and $N$ is prime; Birch (partially) described his method for $k=2$ and $N$ squarefree, in terms of reduction of definite quadratic forms,
while Hein--Tornar\'\i a--Voight generalize to higher weight by considering the action of $\mathrm{SO}(3)$ on nonstandard representations. Hein \cite{hein-github} has implemented the method in C++ for $k=2$ and $N$ squarefree; experimenting with this code reveals several computational benefits.
\begin{itemize}
\item
It is extremely efficient in practice.
\item
The matrix of $T_p$ is guaranteed\footnote{This is not true in Cremona's setup because projecting onto the minus part of the space of modular symbols could in principle introduce a denominator of 2; we have yet to observe this.} to be integral (but not symmetric) and optimally sparse, with at most $p+1$ nonzero entries per row.
\item
It separates eigenspaces for the Atkin-Lehner involutions, thus reducing the complexity of the resulting linear algebra.
\item
It removes some oldforms, thus again simplifying the linear algebra. For example,
if $N$ is squarefree with an odd number of prime factors, then no oldforms appear;
if $N$ is squarefree with an even number of prime factors, one gets an old subspace from the smallest
prime factor of $N$. For general $N$, one sees oldforms from levels which differ from $N$ by a square factor.
\end{itemize}

The early abort strategy of computing ranks modulo $\ell$ is potentially even more effective when using the Birch--Hein--Tornar\'\i a--Voight method, due to the separation of Atkin--Lehner eigenspaces. However, in order to realize this benefit one must probably take $\ell > 2$, as for $\ell = 2$ the two possible eigenvalues of an involution come together, so there is the chance of some problematic (for our purposes) interaction between the eigenspaces.
An analysis of the case $k=2$, $N$ prime, $\ell=3$ would be a natural variant of what we have done here.

Moreover, for $k>2$ the early abort strategy may be of even greater value, 
as rational newforms in $S_k(\Gamma_0(N), \QQ)$ correspond to Galois representations for which there is no systematic construction available. Indeed, there is some evidence that there are only finitely many such forms for $k > 4$ \cite{roberts}; extending previous exhaustive searches, particularly in the borderline case $k=4$, would be a natural next step.

\subsection{Acknowledgments} The authors thank Frank Calegari, Fred Diamond, Robert Pollack, Gabor Wiese, and Hwajong Yoo for helpful conversations.

\section{Elliptic curves and their 2-torsion}\label{ecsearchintro}

For $K$ a quadratic extension of $\QQ$, write $\OO_K$ for its ring of integers, $\cl(K)$ for its class group, $h(K)$ for its class number, and $H(K)$ for its Hilbert class field. 
Write $\cl(K, \aaa)$ for the ray class group of $K$ with conductor $\aaa$ and $h(K,\aaa)$ for the order of $\cl(K,\aaa)$.
Let $\pp(K)$ be a prime of $K$ above $(2)$, and write $\langle \pp(K) \rangle \subset \cl(K)$ for the subgroup that $\pp(K)$ generates. If $K$ is real, let $u(K)$ be a fundamental unit of $K$. 

For $E$ an elliptic curve, write $N_E$ for the conductor of $E$.
Let ${\bar\rho_{E, 2}: G_{\QQ, 2N_E} \to \GL_2(\FF_2)}$
be the mod-2 Galois representation associated to $E$; it factors through $G_{K_E}$ where $K_E := \QQ(E[2])$ has Galois group contained in $\GL_2(\FF_2) \cong S_3$.
By considering the subgroups of $S_3$ and their embeddings in $\GL_2(\FF_2)$, we see that exactly one of the following alternatives holds.
\begin{enumerate}
\item $E[2]$ is reducible as a Galois module, and $K_E$ is either $\QQ$ or a quadratic extension of $\QQ$ unramified away from $2N$.  In other words, $E$ has at least one rational $2$-torsion point.
\item $E[2]$ is irreducible over $\FF_2$ but becomes reducible over $\FF_4$, and $K_E$ is a cubic Galois extension of $\QQ$. In other words, $G_\QQ$ permutes the three non-identity points of $E[2]$ cyclically.\footnote{This happens, for example, for both isogeny classes of elliptic curves of conductor 196 (\url{http://www.lmfdb.org/EllipticCurve/Q/196/}) and isogeny classes {\tt a} and {\tt c} of conductor 324 (\url{http://www.lmfdb.org/EllipticCurve/Q/324/}).}
\item $E[2]$ is absolutely irreducible over $\FF_2$, and $K_E$ is an $S_3$-extension of $\QQ$. 
\end{enumerate}

\begin{prop} \label{P:mod 2 rep for squarefree conductor}
 If $N_E = 2^r M$ for some odd squarefree integer $M$ and some $r \geq 0$, then $E[2]$ is either reducible or absolutely irreducible.
\end{prop}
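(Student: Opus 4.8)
The plan is to rule out the intermediate alternative~(2) in the trichotomy preceding the statement, since ``reducible'' is exactly case~(1) and ``absolutely irreducible'' is exactly case~(3). In case~(2) the image of $\bar\rho_{E,2}$ is the unique subgroup of order $3$, namely $A_3 \subset S_3 \cong \GL_2(\FF_2)$, so $K_E = \QQ(E[2])$ is a cyclic cubic field, Galois over $\QQ$ and unramified outside $2N_E = 2^{r+1}M$. I would derive a contradiction by showing that such a $K_E$ would have to be unramified at \emph{every} finite prime, which is impossible for a nontrivial extension of $\QQ$ (Minkowski's bound, equivalently that $\QQ$ has class number $1$).

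First I would control the odd primes $p \mid M$. Since $M$ is squarefree we have $v_p(N_E)=1$, so $E$ has multiplicative reduction at $p$, and via the Tate parametrization the inertia group $I_p$ acts unipotently on $E[2]$ (for $p \neq 2$ it fixes the $\mu_2$-sub and acts trivially on the unramified quotient). Hence the image of $I_p$ has order $1$ or $2$; but a nontrivial unipotent element of $\GL_2(\FF_2)$ is a transposition in $S_3$, whereas $A_3$ contains no element of order $2$. Therefore the image of $I_p$ is trivial, and $K_E$ is unramified at every odd prime dividing $M$.

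Next I would treat $p=2$, where I cannot appeal to the reduction type of $E$. Instead I would use that $I_2$ maps into the order-$3$ group $A_3$: the wild inertia, being pro-$2$, dies, and a cyclic, totally tamely ramified cubic extension of $\QQ_2$ would force $\mu_3 \subset \QQ_2$, i.e. $3 \mid 2-1$, which is false. Hence $2$ is unramified in $K_E$ as well. Combining the two steps, $K_E/\QQ$ is unramified at $2$ and at every $p \mid M$, and unramified elsewhere by the choice of $G_{\QQ,2N_E}$, so $K_E$ is unramified at all finite primes --- forcing $K_E = \QQ$, contradicting $[K_E:\QQ]=3$.

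I expect the genuinely load-bearing point to be the prime $p=3$: if one argued only through the classical fact that a cyclic cubic field ramifies solely at $3$ and at primes $\equiv 1 \cmod 3$, then $3$ would be permitted to ramify, so the squarefreeness hypothesis is essential precisely in order to kill ramification at $3$ through multiplicative reduction. The other delicate prime is $2$ itself, whose reduction type is unconstrained; there the argument cannot go through the curve and must instead run through the tame-ramification obstruction above. Everything else is the bookkeeping of matching subgroups of $S_3$ to the reducibility behavior of $E[2]$.
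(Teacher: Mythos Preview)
Your proof is correct and follows essentially the same strategy as the paper: rule out the cyclic cubic case by showing $K_E$ would be everywhere unramified. At odd primes you argue via unipotence of inertia under multiplicative reduction where the paper argues via reducibility of the local decomposition group; at $2$ you give the local obstruction (no totally tamely ramified cyclic cubic over $\QQ_2$ since $3 \nmid 2-1$) where the paper invokes the global fact that the maximal abelian extension of $\QQ$ unramified outside $2$ is $\QQ(\zeta_{2^\infty})$ with pro-$2$ Galois group --- these are equivalent minor variations of the same idea.
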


\begin{proof}
Suppose to the contrary that $K_E$ is cubic. Let $\ell$ be an odd prime dividing $N_E$. Since $\ell$ divides $N_E$ exactly once, $E$ has multiplicative reduction at $\ell$; hence the action
of $G_{\QQ_\ell}$ on the 2-adic Tate module of $E$ is reducible, and likewise for the action on $E[2]$.
 However, the (unique) order-$3$ subgroup of $\GL_2(\FF_2)$ is $\{ \begin{psmallmatrix} 1 & 0 \\ 0 & 1\end{psmallmatrix}, \begin{psmallmatrix} 0 & 1 \\ 1 & 1\end{psmallmatrix}, \begin{psmallmatrix} 1 &1 \\ 1 & 0\end{psmallmatrix}\}$, which acts irreducibly. Therefore the image of $G_{\QQ_\ell}$ is trivial in $\GL_2(\FF_2)$, and so $K_E$ is unramified at $\ell$. Since this is true for every odd $\ell$ dividing $N_E$, $K_E$ is ramified at most at $2$. But there are no cubic extensions of $\QQ$ unramified outside $2$: the maximal abelian extension unramified outside $2$ is $\QQ(\zeta_{2^\infty})$, whose Galois group is pro-$2$.
\end{proof}

In light of Proposition~\ref{P:mod 2 rep for squarefree conductor}, when $N_E$ is squarefree, we say that $E$ is \emph{reducible} if $E[2]$ is a reducible representation of $G_{\QQ}$ and \emph{$K$-dihedral}, or simply \emph{dihedral}, if $K_E$ is an $S_3$-extension containing a quadratic extension $K$ of $\QQ$.

Recall that $E$ is \emph{ordinary (at $2$)} if $a_2(E)$ is odd, and \emph{supersingular (at $2$)} otherwise. By theorems of Deligne and Fontaine (see Theorem \ref{delignefontaine}), $E$ is ordinary at $2$ if and only if $\bar\rho_{E, 2}|_{G_{\QQ_2}}$ is reducible. In particular, reducible elliptic curves are ordinary.

The following theorem will be proved in section~\ref{sec:proof of Theorem ec2}.

\begin{theorem}\label{ec2}
Let $N$ be an odd prime.
\begin{enumerate}
\item\label{dihedral} Every dihedral elliptic curve of conductor $N$ is either $\KNp$-dihedral or $\KNm$-dihedral.
\item \label{plusminusN}  {\bf Ordinary dihedral elliptic curves:}
For $K = \KNpm$, if $3 \nmid \displaystyle \frac{h(K)}{\# \langle \pp(K) \rangle}$, then there are no ordinary $K$-dihedral elliptic curves of conductor $N$.
\item\label{ss} {\bf Supersingular elliptic curves.}
\begin{enumerate}
\item \label{ss17} If $N \equiv 1, 7 \cmod{8}$, then there are no supersingular elliptic curves of conductor~$N$.
\item\label{ss3} If $N \equiv 3 \cmod{8}$, then every supersingular elliptic curve of conductor $N$ is $\KNm$-dihedral. 
\item \label{ss5} If $N \equiv 5 \cmod{8}$, then every supersingular elliptic curve of conductor $N$ is $\KNp$-dihedral. If $u(K) \not\equiv 1 \cmod{2 \OO_K}$, then there are no supersingular elliptic curves of conductor $N$.
\end{enumerate}
\item\label{eisenstein} {\bf Reducible elliptic curves:} If $N \not \equiv 1 \cmod{8}$, then there are no reducible elliptic curves of conductor $N$.
\end{enumerate}
\end{theorem}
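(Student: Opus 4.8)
The plan is to translate each clause into a statement about $\bar\rho := \bar\rho_{E,2}$ and then into class field theory over $K = \KNpm$. Throughout I use that conductor $N$ (with $N$ an odd prime) forces good reduction away from $N$ and multiplicative reduction at $N$, so $\bar\rho$ is unramified outside $2N$, tamely ramified at $N$ with unipotent (hence transposition or trivial) inertia, and possibly wildly ramified only at the residue characteristic $2$. For the first assertion, suppose $E$ is dihedral, so $K_E$ is an $S_3$-extension and $\bar\rho\otimes\FF_4 \cong \Ind_K^\QQ\psi$ for the quadratic resolvent $K$ (the fixed field of the sign character $G_\QQ\to S_3\to\{\pm1\}$) and a character $\psi\colon G_K\to\FF_4^\times$ with $\psi^\sigma=\psi^{-1}$, where $\sigma$ generates $\Gal(K/\QQ)$; the image is all of $S_3$ exactly when $\psi$ has order $3$. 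Now $K$ is unramified outside $2N$, and if $\bar\rho$ were unramified at $N$ then $K_E$ would be unramified outside $2$, hence would contain a cubic field unramified outside $2$, contradicting (the proof of) Proposition~\ref{P:mod 2 rep for squarefree conductor}. Thus inertia at $N$ is a transposition and $K$ is ramified at $N$; comparing with the minimal discriminant $\pm N^{k}$ (good reduction away from $N$) gives $K=\QQ(\sqrt{\pm N^{k}})=\KNpm$ with $k$ odd, which is assertion~\ref{dihedral}.

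Next I analyze the prime $2$ via Deligne--Fontaine (Theorem~\ref{delignefontaine}): $E$ is ordinary at $2$ iff $\bar\rho|_{G_{\QQ_2}}$ is reducible, and supersingular iff it is irreducible. Since supersingular good reduction forces $\bar\rho|_{I_2}$ to be given by the level-$2$ fundamental character, the image of $I_2$ is the order-$3$ group $A_3\subset S_3$; as $A_3$ fixes $K$, the prime $2$ is unramified in $K$. Because Proposition~\ref{P:mod 2 rep for squarefree conductor} rules out cubic image, a supersingular curve of prime conductor is automatically dihedral, hence $\KNpm$-dihedral by the first part; matching ``$2$ unramified in $\KNpm$'' against the splitting of $2$, which depends only on $N\bmod 8$, forces the sign and yields the structural parts of \ref{ss17}, \ref{ss3}, \ref{ss5} ($\KNp$ needs $N\equiv5$, $\KNm$ needs $N\equiv3\cmod 8$, and neither is available for $N\equiv1,7\cmod 8$). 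In the ordinary case, reducibility of $\bar\rho|_{G_{\QQ_2}}$ together with $\psi^\sigma=\psi^{-1}$ forces $\psi$ to be trivial on the decomposition group at the prime $\pp(K)$ above $2$: when $2$ is inert or ramified this is because $\psi_{\pp(K)}=\psi_{\pp(K)}^{-1}$ is simultaneously cubic and quadratic, while when $2$ splits, reducibility makes the connected--étale sequence of $E[2]$ split and gives the same conclusion. Hence $\psi$ factors through $\cl(K)/\langle\pp(K)\rangle$.

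Assertion~\ref{plusminusN} then follows, since a nontrivial cubic character of $\cl(K)/\langle\pp(K)\rangle$ exists only if $3\mid h(K)/\#\langle\pp(K)\rangle$. For the nonexistence clause of \ref{ss5}, the ramified cubic character demanded by the level-$2$ fundamental character when $2$ is inert in the real field $K=\KNp$ is pinned to the quotient $(\OO_K/2\OO_K)^\times\cong\FF_4^\times$; in the ray class sequence $\OO_K^\times \to (\OO_K/2\OO_K)^\times \to \cl(K,(2)) \to \cl(K)\to 1$ the fundamental unit surjects onto this $\ZZ/3$ precisely when $u(K)\not\equiv 1\cmod{2\OO_K}$, killing the extra cubic character and excluding supersingular curves. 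Finally, \ref{eisenstein} is an Eisenstein phenomenon: a reducible $\bar\rho$ has trivial semisimplification (both diagonal characters land in $\FF_2^\times=\{1\}$), so $E$ is congruent mod $2$ to the weight-$2$ level-$N$ Eisenstein series, which by Mazur's analysis of the Eisenstein ideal \cite{mazur} occurs only when $2$ divides the numerator of $(N-1)/12$, i.e.\ $N\equiv 1\cmod 8$.

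The main obstacle is the local analysis at $2$. Because $2$ is simultaneously the residue characteristic of the coefficients and the place being localized, the naive Frobenius--trace dictionary fails, and one must instead read the exact local behavior of $\psi$ off the finite-flat structure of $E[2]$ (the connected--étale sequence in the ordinary case, the level-$2$ fundamental character in the supersingular case). Getting these local conditions precisely right is what fixes the correct ray class modulus, and hence the quotient $\cl(K)/\langle\pp(K)\rangle$ and the unit condition. I expect the real-quadratic unit bookkeeping in \ref{ss5} to be the most delicate point, since it is exactly the image of $\OO_K^\times$ in $(\OO_K/2\OO_K)^\times$ that decides whether the supersingular character survives.
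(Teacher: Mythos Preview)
Your overall strategy matches the paper's, and parts~\eqref{plusminusN}, \eqref{eisenstein}, and the unit clause of~\eqref{ss5} are handled correctly. For part~\eqref{dihedral} you take a genuinely different route from the paper: rather than running through the list of quadratic fields unramified outside $2N$ and excluding $\QQ(i),\KK{\pm2}$ via the determinant (Lemma~\ref{baddet}) and $\KK{\pm 2N}$ via tr\`es-sauvage ramification, you invoke the classical identity $K=\QQ(\sqrt{\Delta_E})$ together with the observation that a conductor-$N$ curve has minimal discriminant $\pm N^k$. This is slicker for elliptic curves, though it does not transport to the modular-form statement (Theorem~\ref{mf2}), which is why the paper argues as it does.

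There is, however, a real gap in your supersingular analysis. From the level-$2$ fundamental character you correctly extract that the image of $I_2$ is $A_3$, hence that $2$ is \emph{unramified} in $K$. But ``$2$ unramified in $\KNpm$'' does not give what you claim: by Table~\ref{ordersparity}, $2$ splits (hence is unramified) in $\KNp$ when $N\equiv 1\cmod 8$ and in $\KNm$ when $N\equiv 7\cmod 8$, so your stated criterion fails to exclude these cases. What you need is that $2$ is \emph{inert} in $K$. The missing step is to rule out $2$ splitting: if $2$ split then $G_{\QQ_2}\subset G_K$, so $\bar\rho|_{G_{\QQ_2}}$ would sit inside $\psi\oplus{}^\sigma\psi$ and hence be reducible over $\bar\FF_2$; but Theorem~\ref{delignefontaine}\eqref{dfss} identifies the supersingular local representation with $\Ind_{G_{\QQ_4}}^{G_{\QQ_2}}\omega_2$, which is \emph{absolutely} irreducible since $\omega_2\neq\omega_2^2$. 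Equivalently, the image of the full decomposition group at $2$ is all of $S_3$, not just $A_3$. Once $2$ is forced to be inert, Table~\ref{ordersparity} yields exactly the constraints in \eqref{ss17}--\eqref{ss5}. (The paper itself argues this point differently, by showing that $\psi$ cannot factor through $\cl(K)$ in the supersingular case and then invoking Corollary~\ref{2unram} to force $2$ inert.)

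A smaller point: in the ordinary split case your appeal to ``the connected--\'etale sequence of $E[2]$ splitting'' is not the relevant mechanism. What you actually need is that $a_2(E)\in\ZZ$ and ordinary forces $a_2\equiv 1\cmod 2$, so the unramified character $\lambda$ of Theorem~\ref{delignefontaine}\eqref{dford} is trivial; then both diagonal characters of $\bar\rho|_{G_{\QQ_2}}$ are trivial, whence $\psi_{\pp}=1$.
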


For prime $N$ and $K = \KNpm$, the order of $\pp(K)$ in $\cl(K)$ divides $2$ unless $N \equiv 1 \cmod{8}$, so if $N \equiv 3, 5, 7 \cmod{8}$ then the condition $3 \nmid \frac{h(K)}{\# \langle \pp(K) \rangle}$ in \eqref{plusminusN} is equivalent to $3 \nmid h(K)$. Similarly, if $N \not\equiv 7 \cmod{8}$ and $K = \KNm$, then the condition $3 \nmid \frac{h(K)}{\# \langle \pp(K) \rangle}$  in \eqref{plusminusN} is equivalent to $3 \nmid h(K)$.

Theorem~\ref{ec2} includes a theorem of Setzer \cite[Theorem~1]{setzer}: if $N$ is a prime congruent to 1 or 7 mod 8
such that $3\nmid h(\KNpm)$, then every elliptic curve of conductor $N$ is reducible.
With similar methods, we also recover the following results of Hadano \cite[Theorem~II, Theorem~III]{hadano} and Kida \cite[Theorem~3.3]{kida}. (Kida's original statement requires $N-64$ to not be a square; for $N \neq 17$, this is equivalent to existence of a reducible elliptic curve of conductor $N$ \cite[Theorem~2]{setzer}.
 See also \cite[Theorem~I]{hadano}.)

\begin{theorem}[Hadano] \label{hadano}
Let $N$ be a prime such that $3 \nmid h(\KNpm), h(\KK{\pm 2N})$.
\begin{enumerate}
\item
If $N \equiv 1,7 \cmod{8}$, then every elliptic curve of conductor $2N$ is reducible.
\item
If $N\equiv 3,5 \cmod{8}$, there are no elliptic curves of conductor $2N$.
\end{enumerate}
\end{theorem}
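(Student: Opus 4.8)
The plan is to carry out, for conductor $2N$, the same case analysis that underlies Theorem~\ref{ec2}, now with the prime $2$ sitting inside the conductor. As $N$ is an odd prime, $2N$ is squarefree, so Proposition~\ref{P:mod 2 rep for squarefree conductor} applies and any elliptic curve $E$ of conductor $2N$ is either reducible or dihedral. The first simplification is that the supersingular branch is empty: since $2$ exactly divides $2N$, the curve $E$ has multiplicative reduction at $2$, whence $a_2(E) = \pm 1$ is odd and $E$ is ordinary at $2$. Thus the supersingular analysis of Theorem~\ref{ec2}\eqref{ss} has no counterpart here, and it suffices to eliminate ordinary dihedral curves in both cases, and additionally reducible curves when $N \equiv 3,5 \cmod{8}$.

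To locate the quadratic field $K$ attached to a dihedral $E$, I would use the resolvent quadratic of the $2$-division cubic, namely $K = \KK{\Delta_{\min}}$ for the minimal discriminant $\Delta_{\min} = \pm 2^a N^b$ with $a,b \geq 1$. The key point is that $K$ is ramified at $N$: otherwise $\bar\rho_{E,2}$ would be an $S_3$-representation unramified outside $2$, forcing a cyclic cubic extension unramified outside $2$ of one of the quadratic fields $\KK{-1}, \KK{\pm 2}$, which a direct ray class computation excludes (in the spirit of Proposition~\ref{P:mod 2 rep for squarefree conductor}). Hence $b$ is odd and, since the squarefree part of $\Delta_{\min}$ is then $\pm N$ or $\pm 2N$ according to the parity of $a$, every dihedral curve of conductor $2N$ is $K$-dihedral for some $K \in \{\KNpm, \KK{\pm 2N}\}$ --- exactly the four fields appearing in the hypotheses.

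I would then eliminate these curves by adapting the ordinary-dihedral count of Theorem~\ref{ec2}\eqref{plusminusN} to the situation where $2$ ramifies. A $K$-dihedral representation has the form $\Ind_{G_K}^{G_\QQ}\chi$ for a cubic character $\chi$ of a ray class group of $K$, and the requirement that the induced representation have conductor exactly $2N$ prescribes the ramification of $\chi$ at the prime $\pp(K)$ above $2$; the number of admissible $\chi$ is then governed by the $3$-part of the relevant ray class group, which differs from $h(K)$ only through factors involving $\#\langle\pp(K)\rangle$ and a local term at $\pp(K)$. Reasoning as in the remarks following Theorem~\ref{ec2}, the hypotheses $3 \nmid h(\KNpm), h(\KK{\pm 2N})$ make this $3$-part vanish for each of the four candidate fields, so no admissible $\chi$ exists and there are no dihedral curves of conductor $2N$. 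As every such curve is ordinary and non-dihedral, it must be reducible, which proves part~(1).

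For part~(2) it remains to exclude reducible curves of conductor $2N$ when $N \equiv 3,5 \cmod{8}$. Such an $E$ has a rational point of order $2$, hence a model $y^2 = x(x^2 + ax + b)$ and a rational $2$-isogeny; tracking the semistable reduction of this model and of its $2$-isogenous partner at $2$ and $N$, one finds that conductor $2N$ is realizable only for $N \equiv \pm 1 \cmod{8}$. This is the level-$2N$ analog of Theorem~\ref{ec2}\eqref{eisenstein}, and can alternatively be extracted from the reducible mod-$2$ (Eisenstein) representations of squarefree level $2N$ via Yoo's analysis of the Eisenstein ideal. For $N \equiv 3,5 \cmod{8}$ this congruence fails, so no reducible curve of conductor $2N$ exists; combined with the dihedral analysis, there are no elliptic curves of conductor $2N$ at all. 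I expect the main obstacle to lie not in the global input $3 \nmid h$, but in the local analysis at $2$: pinning down both the conductor-$2N$ normalization of $\chi$ at $\pp(K)$ in the dihedral case and the exact residue of $N$ modulo $8$ in the reducible case requires controlling the ramification at $2$ that is absent in the prime-conductor setting of Theorem~\ref{ec2}.
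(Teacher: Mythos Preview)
Your overall strategy matches the paper's: reduce to the reducible/ordinary-dihedral dichotomy via Proposition~\ref{P:mod 2 rep for squarefree conductor} and multiplicative reduction at $2$, identify the four candidate fields $K$, eliminate the dihedral case using $3 \nmid h(K)$, and handle the reducible case separately. Your identification of $K$ via the discriminant of the $2$-division cubic is a valid alternative to the paper's route through Lemma~\ref{baddet} and the tame conductor formula~\eqref{tameartincond}; the latter is more uniform (it works for modular forms, not only elliptic curves) but yours is perfectly adequate here. For the reducible case, the paper takes exactly the Yoo option you mention: the relevant Eisenstein ideal forces the numerator of $(N^2-1)/8$ to be even, hence $N \equiv 1,7 \cmod{8}$.

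There is, however, a genuine gap in your dihedral step. You assert that $3 \nmid h(K)$ kills the $3$-part of ``the relevant ray class group,'' but this is false in the two cases where $2$ is inert in $K$ (namely $N\equiv 3 \cmod 8$, $K=\KNm$ and $N\equiv 5 \cmod 8$, $K=\KNp$): there $(\OO_K/2\OO_K)^\times \cong \FF_4^\times$ contributes a factor of~$3$ to $\cl(K,(2))$, so cubic characters ramified at~$2$ exist regardless of $h(K)$. These are precisely the characters that produce the \emph{supersingular} dihedral forms in Theorem~\ref{mf2}\eqref{countss3}--\eqref{countss5}. What rules them out here is not a conductor-$2N$ bookkeeping condition on $\chi$, but the ordinariness you noted in your first paragraph and then did not use: multiplicative reduction at~$2$ forces $\bar\rho_{E,2}|_{G_{\QQ_2}}$ to be reducible with unipotent image, whereas if $\psi$ were ramified at~$2$ in the inert case one would have $\psi_2 \neq {}^{\sigma}\psi_2 = \psi_2^{-1}$ and hence $\Ind_{K_\pp}^{\QQ_2}\psi_2$ irreducible. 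Equivalently, arguing exactly as in section~\ref{sec:proofpart2}, local reducibility plus $\det = 1$ gives $\psi_2 = \psi_2^{-1}$, so $\psi_2 = 1$ in characteristic~$2$ and $\psi$ is unramified at~$2$ after all. Once you insert this step, $\psi$ factors through $\cl(K)$ and your hypothesis $3\nmid h(K)$ finishes the dihedral case cleanly; the ``local analysis at~$2$'' you flagged as the main obstacle is thus already available to you from the ordinariness observation.
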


\begin{theorem}[Kida] \label{kida}
Let $N$ be a prime such that none of
\[
h(\KNpm), \quad h(\KK{(-1)^{(N-1)/2} N}, 2)
\]
is divisible by $3$. Then every elliptic curve of conductor $N$ is reducible.
\end{theorem}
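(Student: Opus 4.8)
The plan is to derive Kida's theorem directly from Theorem~\ref{ec2}. By Proposition~\ref{P:mod 2 rep for squarefree conductor}, any elliptic curve $E$ of prime conductor $N$ is either reducible or dihedral, and since reducible curves are ordinary, every supersingular curve is in particular dihedral. Thus, to prove that every $E$ of conductor $N$ is reducible it suffices to rule out the two non-reducible possibilities: ordinary dihedral curves and supersingular curves. By Theorem~\ref{ec2}\eqref{dihedral}, any dihedral curve is $\KNp$- or $\KNm$-dihedral, so throughout it is enough to work with $K = \KNpm$.

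For the ordinary dihedral curves I would invoke Theorem~\ref{ec2}\eqref{plusminusN}. The quantity $\frac{h(K)}{\#\langle\pp(K)\rangle}$ is the order of $\cl(K)/\langle\pp(K)\rangle$, hence a divisor of $h(K)$; so the hypothesis $3 \nmid h(K)$ immediately yields $3 \nmid \frac{h(K)}{\#\langle\pp(K)\rangle}$, regardless of the order of $\pp(K)$ (which is the only place the $N \equiv 1 \cmod{8}$ subtlety noted after Theorem~\ref{ec2} could intervene). Applying this for both $K = \KNp$ and $K = \KNm$ rules out all ordinary dihedral curves.

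The supersingular curves require a short case analysis modulo $8$, and this is where the ray class number hypothesis enters. When $N \equiv 1, 7 \cmod{8}$, Theorem~\ref{ec2}\eqref{ss17} already gives nonexistence. When $N \equiv 5 \cmod{8}$ (so $(-1)^{(N-1)/2}N = N$ and $K = \KNp$ is real with $2$ inert), I would use the ray class exact sequence
\[
\OO_K^\times \to (\OO_K/2\OO_K)^\times \to \cl(K,2) \to \cl(K) \to 1,
\]
in which $(\OO_K/2\OO_K)^\times \cong \FF_4^\times$ is cyclic of order $3$ and $-1$ reduces to $1$; hence the $3$-part of $\cl(K,2)$ exceeds that of $\cl(K)$ exactly when the fundamental unit satisfies $u(K) \equiv 1 \cmod{2\OO_K}$. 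Therefore $3 \nmid h(K,2)$ forces $u(K) \not\equiv 1 \cmod{2\OO_K}$, and Theorem~\ref{ec2}\eqref{ss5} gives nonexistence. Finally, when $N \equiv 3 \cmod{8}$ the same exact sequence for the imaginary field $K = \KNm$ (again with $2$ inert, but now $\OO_K^\times = \{\pm1\}$ reducing trivially mod $2$) shows $3 \mid h(K,2)$ always, so the hypothesis $3 \nmid h(\KK{(-1)^{(N-1)/2}N},2)$ is never satisfied and there is nothing to prove; this is consistent with the genuine existence of dihedral curves in this residue class (such as the curve of conductor $11$).

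I expect the main obstacle to be the bookkeeping in the supersingular step: one must pin down the splitting type of $2$ in each $\KNpm$ as a function of $N \cmod{8}$, compute the image of $\OO_K^\times$ in $(\OO_K/2\OO_K)^\times$ correctly (in particular the role of the fundamental unit in the real case), and confirm that the resulting ray class number condition matches exactly both the hypothesis $3 \nmid h(\KK{(-1)^{(N-1)/2}N},2)$ and the unit condition of Theorem~\ref{ec2}\eqref{ss5}. By contrast, the reduction to the dihedral case and the ordinary step are routine consequences of Theorem~\ref{ec2}.
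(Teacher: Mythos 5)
Your proposal is correct in substance and follows essentially the same route as the paper, just organized around the final statements of Theorem~\ref{ec2} instead of the underlying character. The paper's proof rules out ordinary dihedral curves via parts \eqref{plusminusN} exactly as you do, and for supersingular curves it argues directly: with $K = \KK{(-1)^{(N-1)/2}N}$, a supersingular curve would make $\psi$ a nontrivial order-$3$ character of $\cl\!\big(K,(2)\big)$, contradicting $3 \nmid h(K,2)$. Your translation of $3 \nmid h(K,2)$ into the unit criterion of part \eqref{ss5} via the ray class sequence is precisely the computation the paper already carries out in subsection~\ref{subsec:ss}, so you are re-deriving rather than inventing that step; the paper's formulation is slightly more economical because the order-$3$ character argument applies uniformly, with no case split on $N \cmod{8}$ and no appeal to vacuousness.

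There is one pinhole in your $N \equiv 3 \cmod{8}$ case: you assert $\OO_K^\times = \{\pm 1\}$ for $K = \KNm$ and conclude that $3 \mid h(K,2)$ always, so that the hypothesis is never satisfied. This fails at $N = 3$, where $\OO_{\KK{-3}}^\times$ has order $6$ and surjects onto $\big(\OO_K/2\OO_K\big)^\times \cong \FF_4^\times$, giving $h(\KK{-3}, 2) = h(\KK{-3}) = 1$; so the hypothesis \emph{is} satisfied there and your case is not vacuous as claimed. The conclusion of course still holds — there are no elliptic curves of conductor $3$ at all, and alternatively the paper's uniform argument shows a supersingular curve would force $3 \mid h(K,2) = 1$, a contradiction — so the gap is trivially patched by excluding $N = 3$ explicitly or by adopting the paper's direct use of $\psi$, which handles this boundary case automatically. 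Apart from this single prime, your case analysis, including the matching of $\KK{(-1)^{(N-1)/2}N}$ with the relevant field in each residue class and the observation that $h(K)/\#\langle \pp(K)\rangle$ divides $h(K)$ in the ordinary step, is accurate.
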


\section{Representation theory preliminaries}

To prepare for the proof of Theorem~\ref{ec2}, we make some representation-theoretic calculations.
Fix a prime $p$ and a field $\FF$ of characteristic $p$, let $G$ be any group, and let $\rho: G \to \GL_2(\FF)$ be a semisimple representation. Let $\rho(G) \subset \GL_2(\FF)$ and $\widetilde{\rho(G)} \subset \PGL_2(\FF)$ be the image and projective image of $\rho$, respectively.
Then exactly one of the following statements holds \cite[Propositions 15--16]{serre:ecs}.
\begin{enumerate}
\item {\bf Reducible case:} $\widetilde{\rho(G)}$ is a cyclic group $C_n$. In other words, $\rho$ is reducible (over $\bar\FF$), a sum of two characters $\chi \oplus \chi'$, and the order of $\chi/\chi'$ is $n$.
\item {\bf Dihedral case:} $\widetilde{\rho(G)}$ is a dihedral group $D_n$ of order $2n$ with $n \geq 2$.
In other words, $\rho$ is irreducible but there is an index-$2$ subgroup $H$ of $G$, determined uniquely if $n \geq 3$, so that $\left.\rho\right|_H$ splits as a sum of two characters.
\item {\bf Exceptional case:} $\widetilde{\rho(G)}$ is isomorphic to $A_4$, $S_4$, or $A_5$.
\item {\bf Big-image case:} $\widetilde{\rho(G)}$ contains $\PSL_2(\FF_q)$ for some $q \geq 5$, but ${\rho(G) \not= \SL_2(\FF_5)}$.\footnote{The restrictions are explained by exceptional isomorphisms for small primes: $\SL_2(\FF_2) \cong D_3$, $\PSL_2(\FF_3) \cong A_4$, $\PGL_2(\FF_3) \cong S_4$, $\PSL_2(\FF_4) = \PGL_2(\FF_4) \cong A_5$, and ${\PSL_2(\FF_5) \cong A_5}$.}
\end{enumerate}

Call $\rho$ \emph{reducible}, \emph{dihedral}, \emph{exceptional}, or \emph{big-image} accordingly.

\subsection{The dihedral case in detail}

\subsubsection{Inducing a character}\label{inducedrep} Let $H \subset G$ be a normal subgroup. Any character $\psi: H \to F^\times$ to a field $F$ may be twisted by any $g \in G$ to obtain a new character ${}^{g} \psi$, defined by ${}^{g} \psi(h) : = \psi(g^{-1} h g)$. Because $\psi$ factors though an abelian quotient of $H$, one can show that ${}^{g} \psi$ depends only on the class $\bar g$ of $g$ in $G/H$. We therefore write ${}^{\bar g} \psi$ for the twist of $\psi$ by $\bar g \in G/H$.

Now suppose that $H \subset G$ has index $2$ and take $\rho$ to be the induced representation $\Ind_H^G \psi : G \to \GL_2(F)$. Let $\eps_H$ be the (at most quadratic) character of $G$ that takes $H$ to 1 and $G-H$ to $-1$. Let $\bar g$ be the nontrivial element of $G/H$.
The following are well-known (e.g., see \cite[7.2.1]{serre:weight1}):
\begin{enumerate}
\item $\left.\rho\right|_H = \psi \oplus {}^{\bar g} \psi;$ 
\item\label{whenirred} $\rho$ is an irreducible representation of $G$ if and only if $\psi \neq {}^{\bar g} \psi$;
\item $\det \rho = \eps_{H} \cdot \psi ({\rm Ver}^G_H)$,
where ${\rm Ver}^G_H : G \to H^{\rm ab}$ is the \emph{Verlagerung} (transfer) homomorphism taking $x \in G$ to $ x \, g^{-1} x g$\footnote{One can show that $\psi ({\rm Ver}^G_H)$ takes $x \in H$ to $\psi \, {}^{\bar g} \psi(x)$ and takes $x \in G - H$ to $\psi(x^2)$.};
\item \label{whatisn} $\widetilde{\rho(G)} \cong D_n$, where $n$ is the order of ${}^{\bar g} \psi/\psi$ (assuming $\psi$ has finite order).
\end{enumerate}

\subsubsection{Dihedral representations} Conversely, suppose that $\rho: G \to \GL_2(F)$ is a dihedral representation
with $\widetilde{\rho(G)}  = D_n$. If $n \geq 3$, then  $D_n$ contains a unique index-$2$ subgroup isomorphic to $C_n$.\footnote{For $n=2$, there are three such subgroups. But $n$ is the order of a character to $\bar\FF_p^\times$ (see section \ref{inducedrep} \eqref{whatisn}) and hence prime to $p$; as we will later restrict to $p = 2$, we ignore $n = 2$ here.} Let $H \subset G$ be the inverse image of that cyclic subgroup under the map $G \to \GL_2(F) \to \PGL_2(F)$. Since $\widetilde{\rho(H)}$ is a cyclic group, $\left.\rho\right|_H$ is a reducible representation, a sum of two characters, each defined over an at-most-quadratic extension of $F$. Let $\psi: H \to \bar F^\times$ be one of these characters. Then Frobenius reciprocity and dimension considerations guarantee that the map $\Ind_{H}^G \psi \to \rho$ induced by $\psi \to \left.\rho\right|_H$ is an isomorphism.

\subsubsection{The image of a dihedral representation} Suppose further that $\rho$ is a faithful dihedral representation of $G$. With $H$, $\psi$, and ${}^{\bar g}\psi$ as above, we have the following:
\begin{lemma}
\begin{enumerate}
\item \label{kerker} $\ker \psi \cap \ker {}^{\bar g}\psi = 1$.
\item \label{Habelian} $H$ is an abelian subgroup of $G$.
\item If $\ker \psi \subset H$ is normal in $G$, then $\psi$ is faithful, so $H$ is cyclic.
\end{enumerate}
\end{lemma}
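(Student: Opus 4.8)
The plan is to leverage the single structural input established just above, namely that $\rho \cong \Ind_H^G \psi$ restricts on $H$ to the sum $\psi \oplus {}^{\bar g}\psi$, together with the standing hypothesis that $\rho$ is faithful. Since $\rho|_H$ is a direct sum of two characters valued in $\bar F^\times$, after conjugating over $\bar F$ the image $\rho(H)$ lands in the diagonal torus of $\GL_2(\bar F)$, with each $h$ carried to $\mathrm{diag}(\psi(h), {}^{\bar g}\psi(h))$. The diagonal torus is abelian, and faithfulness of $\rho$ identifies $H$ with $\rho(H)$; this yields \eqref{Habelian} at once. For \eqref{kerker}, observe that $h$ lies in $\ker(\rho|_H)$ precisely when both diagonal entries are trivial, i.e.\ when $h \in \ker\psi \cap \ker{}^{\bar g}\psi$; but faithfulness gives $\ker(\rho|_H) = \ker\rho \cap H = 1$, so this intersection is trivial.

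The only step needing a genuinely new ingredient is the third. Here I would first record the conjugation identity $\ker {}^{\bar g}\psi = g\,(\ker\psi)\,g^{-1}$, which is immediate from the defining formula ${}^{\bar g}\psi(h) = \psi(g^{-1}hg)$: one has $h \in \ker {}^{\bar g}\psi$ if and only if $g^{-1}hg \in \ker\psi$. Under the hypothesis that $\ker\psi$ is normal in $G$, this conjugate subgroup equals $\ker\psi$ itself, so $\ker {}^{\bar g}\psi = \ker\psi$. Substituting into \eqref{kerker} forces $\ker\psi = \ker\psi \cap \ker{}^{\bar g}\psi = 1$, so $\psi$ is faithful. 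Since $\psi$ is a character of finite order, its image is a finite subgroup of the multiplicative group $\bar F^\times$ and is therefore cyclic; as $\psi$ is now faithful, the identification $H \cong \psi(H)$ shows $H$ is cyclic as well.

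I do not expect a serious obstacle: the statement is in essence a bookkeeping consequence of the induction formula for $\rho|_H$ and the injectivity of $\rho$. The one place to proceed with care is getting the direction of conjugation right in $\ker {}^{\bar g}\psi = g\,(\ker\psi)\,g^{-1}$, and recognizing that normality of $\ker\psi$ is exactly the condition that collapses the two kernels onto each other, so that \eqref{kerker} can then be invoked to deduce faithfulness of $\psi$ (and hence cyclicity of $H$).
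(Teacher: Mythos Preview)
Your proof is correct and follows essentially the same approach as the paper's: both deduce everything from the decomposition $\rho|_H \cong \psi \oplus {}^{\bar g}\psi$ together with faithfulness of $\rho$, and both prove part (3) by observing that conjugation by $g$ carries $\ker\psi$ to $\ker{}^{\bar g}\psi$, so normality collapses the two kernels and part \eqref{kerker} finishes. The only cosmetic difference is that for part \eqref{Habelian} the paper notes that commutators in $H$ lie in $\ker\psi \cap \ker{}^{\bar g}\psi$ and invokes \eqref{kerker}, whereas you argue directly that $\rho(H)$ lands in the diagonal torus; these are the same observation.
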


The proofs are straightforward but not completely standard, so we include them.

\begin{proof}
\begin{enumerate}
\item Indeed, $\left.\rho\right|_H = \psi \oplus {}^{\bar g}\psi$ and we have assumed that $\ker \rho$ is trivial.
\item The commutator of any two elements of $H$ is in both $\ker \psi$ and $\ker {}^{\bar g} \psi$; the claim follows from part \eqref{kerker}.
\item By part \eqref{Habelian}, $G/H$ acts on $H$ by conjugation, and $\ker {}^{\bar g} \psi$ is the image of $\ker \psi$ under the action of the nontrivial element. Now use \eqref{kerker}.
\end{enumerate}
\vspace{-10pt}
\end{proof}

Note that even if $\psi$ is faithful and $H$ is finite cyclic of order $n$ and the sequence $$1 \to H \to G \to G/H \to 1$$ splits (i.e., there is an order-$2$ element in $G - H$), we cannot conclude that $G$ is isomorphic to $D_n$: the dicyclic groups give a counterexample for every even $n$.

\subsubsection{Translating to Galois representations}

Let $\rho: G_{\QQ} \to \GL_2(F)$ be a finite-image dihedral representation such that $\left| \widetilde{\rho(G_{\QQ})} \right| \geq 6$. Let $K$ be the quadratic extension of $\QQ$ for which
$[\widetilde{\rho(G_{\QQ})}:\widetilde{\rho(G_K)}] = 2$, so that $\left.\rho\right|_{G_K}$ is reducible. Let $\psi: G_K \to F^\times$ be a character appearing in $\left.\rho\right|_{G_K}$ and let $L_\psi$ be the fixed field of $\ker \psi$. If $L_{\psi}/\QQ$ is Galois, then $L_\psi = \ker \rho$.  Otherwise, writing $\Gal(K/\QQ)= \{1, \sigma\}$, we obtain the twist ${}^\sigma \psi$; its fixed field $L_{{}^\sigma \psi}$ is the image ${\tilde \sigma(L_{\psi})} \subset \bar\QQ$ for any lift $\tilde \sigma$ of $\sigma$ to $G_\QQ$; and $\ker \rho =: M$ is the compositum $L_\psi L_{{}^\sigma \psi}$ (inside $\bar\QQ$). In particular, it is clear that $M$ is an abelian extension of $K$.

\subsubsection{Artin conductor formulas}
We will also make use of the following formula (see, for example, \cite[Corollary 1]{taguchi}) for the Artin conductor of $\Ind_K^\QQ \psi$ in terms of the Artin conductor of $\psi$ :
\begin{equation}\label{artincond}
\cond(\Ind_K^\QQ \psi) = \left|\Delta_K\right| \:{\mathcal N}^{K}_{\QQ}\big(\!\cond\psi \big),
\end{equation}
where ${\mathcal N}^K_{\QQ}$ is the field norm and $\Delta_{K}$ is the discriminant of $K$.

If $F$ is a finite extension of $\FF_p$ or a $p$-adic field, we will denote the \emph{tame} or prime-to-$p$ Artin conductor by $\cond^{(p)}$.
The analogous formula holds:
\begin{equation}\label{tameartincond}
\cond^{(p)}(\Ind_K^\QQ \psi) = \left|\Delta^{(p)}_K\right| \:{\mathcal N}^{K}_{\QQ}\big(\!\cond^{(p)}\!\chi \big).
\end{equation}
Here $\Delta^{(p)}_{K}$ is the prime-to-$p$ part of the discriminant of $K$.

\subsection{Mod-$2$ dihedral Galois representations}
From now on, we work with ${F=\FF},$ a finite extension of $\FF_2$. Suppose that $\rho = \Ind_K^\QQ \psi: G_{\QQ} \to \GL_2(\FF)$ is a $K$-dihedral representation for some quadratic $K$ over $\QQ$ and ray class (i.e., Hecke) character $\psi: G_K \to \FF^\times$. 

\subsubsection{Implications of $\det \rho = 1$}

Again, let $L_\psi$ be the fixed field of $\ker \psi$.

\begin{lemma}\label{det1}
If $\det \rho = 1$, then $L_\psi$ is Galois over $\QQ$.
\end{lemma}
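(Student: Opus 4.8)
The plan is to show that the hypothesis $\det\rho = 1$ forces $\ker\psi$ to be normal not merely in $G_K$ but in all of $G_\QQ$, which is exactly equivalent to $L_\psi/\QQ$ being Galois. Write $\Gal(K/\QQ) = \{1,\sigma\}$ and fix a lift $\tilde\sigma \in G_\QQ$ of $\sigma$. Since $\psi$ factors through the abelian quotient of $G_K$, its kernel is automatically normal in $G_K$; and because $G_K$ has index $2$ in $G_\QQ$, normality of $\ker\psi$ in $G_\QQ$ reduces to stability under conjugation by $\tilde\sigma$. Unwinding the definition ${}^\sigma\psi(h) = \psi(\tilde\sigma^{-1} h \tilde\sigma)$, one checks the bookkeeping identity $\tilde\sigma\,(\ker\psi)\,\tilde\sigma^{-1} = \ker\!\big({}^\sigma\psi\big)$, so it suffices to prove $\ker\!\big({}^\sigma\psi\big) = \ker\psi$.

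The key input is a comparison of determinants after restricting to $G_K$. By item (1) of section~\ref{inducedrep} we have $\rho|_{G_K} = \psi \oplus {}^\sigma\psi$, and hence $\det(\rho|_{G_K}) = \psi \cdot {}^\sigma\psi$. On the other hand, $\det(\rho|_{G_K})$ is just the restriction of $\det\rho = 1$, so $\psi \cdot {}^\sigma\psi$ is the trivial character of $G_K$; that is, ${}^\sigma\psi = \psi^{-1}$. (Alternatively one could start from $\det\rho = \eps_H \cdot \psi({\rm Ver}^G_H)$ and use that $\eps_H$ is trivial in characteristic $2$, but restricting to $G_K$ avoids the transfer computation entirely.)

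Finally, a character and its inverse visibly have the same kernel, so $\ker\!\big({}^\sigma\psi\big) = \ker(\psi^{-1}) = \ker\psi$. By the reduction in the first paragraph this shows $\ker\psi$ is normal in $G_\QQ$, and therefore its fixed field $L_\psi$ is Galois over $\QQ$. I do not expect a genuine obstacle here: the only points requiring care are the identity $\tilde\sigma\,(\ker\psi)\,\tilde\sigma^{-1} = \ker\!\big({}^\sigma\psi\big)$ and the remark that the argument does not require $\psi$ to be quadratic. In characteristic $2$ the group $\FF^\times$ has odd order, so the constraint ${}^\sigma\psi = \psi^{-1}$ is nontrivial (it is genuinely not the same as ${}^\sigma\psi = \psi$), yet the equality of kernels $\ker(\psi^{-1}) = \ker\psi$ holds in all cases, which is what makes the conclusion go through.
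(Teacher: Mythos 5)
Your proposal is correct and follows essentially the same route as the paper: both restrict $\det\rho$ to $G_K$ to deduce ${}^\sigma\psi = \psi^{-1}$ from $\rho|_{G_K} = \psi \oplus {}^\sigma\psi$, and then conclude from $\ker({}^\sigma\psi) = \ker(\psi^{-1}) = \ker\psi$ that $L_\psi/\QQ$ is Galois. The only difference is presentational: you carry out the normality bookkeeping at the level of $\ker\psi \subset G_\QQ$ explicitly, whereas the paper invokes its earlier remark that $L_{{}^\sigma\psi} = \tilde\sigma(L_\psi)$, so the fixed fields coinciding gives Galois stability directly.
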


\begin{proof}
If $\det \rho = 1$, then considering $\det \rho$ on the subgroup $G_K$, we see that ${{}^\sigma \psi=\psi^{-1}}$. Therefore $L_\psi$ is also the fixed field of $\ker {}^\sigma \psi$, which means that $L_\psi/\QQ$ is Galois and $L_\psi$ is the fixed field of $\ker \rho$.
\end{proof}

\subsubsection{The conductor of $\psi$}
Let $\aaa$ be the conductor of $\psi$. Since we work in characteristic $2$, we are only interested in odd-order $\psi$ here; we thus ignore consideration of any real places of $K$ and view $\aaa$ as an integral ideal of $K$. We have a standard exact sequence relating the class group $\cl(K)$ to the ray class group $\cl(K, \aaa)$:
\begin{equation} \label{rayclasseq}
\OO_K^\times \to \big(\OO_K/\aaa \big)^\times \to \cl(K, \aaa) \to \cl(K) \to 1
\end{equation}

\begin{lemma}\label{onlyeven}
If $\aaa = \qq^n$ is a power of a prime of $\OO_K$ lying over a prime $q$ of $\ZZ$, then
\begin{equation*}
[\cl(K, \aaa): \cl(K)] \mbox{ divides }
\begin{cases}
(q-1) q^k \mbox{ for some $k \geq 0$} & \mbox{ if $(q)$ splits or ramifies in $K$,} \\
(q^2 - 1) q^k  \mbox{ for some $k \geq 0$}& \mbox{ if $(q)$ is inert in $K$.}
\end{cases}
\end{equation*}
\end{lemma}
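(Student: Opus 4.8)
The plan is to read off the bound directly from the ray class sequence \eqref{rayclasseq}. Since $\aaa = \qq^n$ is a prime power, the middle term $(\OO_K/\aaa)^\times = (\OO_K/\qq^n)^\times$ is a finite abelian group whose order I can compute explicitly, and the exact sequence shows that $[\cl(K,\aaa):\cl(K)]$ is the order of the cokernel of $\OO_K^\times \to (\OO_K/\aaa)^\times$, hence a divisor of $\#(\OO_K/\qq^n)^\times$. So the whole statement reduces to understanding the structure of $(\OO_K/\qq^n)^\times$ according to how $q$ behaves in $K$.

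First I would dispose of the split and ramified cases together. When $(q)$ splits or ramifies, the prime $\qq$ has residue field $\OO_K/\qq \cong \FF_q$, so the residue field has size $q$. The standard filtration of $(\OO_K/\qq^n)^\times$ by the subgroups $1 + \qq^i/\qq^n$ has top quotient $(\OO_K/\qq)^\times \cong \FF_q^\times$ of order $q-1$, and each successive quotient $(1+\qq^i)/(1+\qq^{i+1}) \cong \OO_K/\qq \cong \FF_q$ is an elementary abelian $q$-group. Therefore $\#(\OO_K/\qq^n)^\times = (q-1)q^{n-1}$, which is of the form $(q-1)q^k$; any divisor of it (in particular the cokernel order) has the same shape, giving the first case.

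The inert case is identical in structure but with a larger residue field. When $(q)$ is inert, $\qq = (q)$ and the residue field $\OO_K/\qq \cong \FF_{q^2}$ has size $q^2$, so $(\OO_K/\qq)^\times \cong \FF_{q^2}^\times$ has order $q^2-1$, while the higher filtration quotients $(1+\qq^i)/(1+\qq^{i+1})$ are again elementary abelian $q$-groups. Hence $\#(\OO_K/\qq^n)^\times = (q^2-1)q^{2(n-1)}$, of the form $(q^2-1)q^k$, and the claim follows as before. Combining with the cokernel argument from the exact sequence finishes both cases.

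I do not anticipate a genuine obstacle here; the only point requiring a little care is making sure the argument via \eqref{rayclasseq} correctly identifies $[\cl(K,\aaa):\cl(K)]$ with the order of the image of $(\OO_K/\aaa)^\times$ in $\cl(K,\aaa)$, which is a quotient of $(\OO_K/\aaa)^\times$ and therefore a divisor of its order. Once that exactness bookkeeping is in place, the result is a direct consequence of the elementary computation of $\#(\OO_K/\qq^n)^\times$, and the distinction between the stated bounds is exactly the distinction between residue fields $\FF_q$ and $\FF_{q^2}$.
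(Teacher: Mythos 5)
Your proposal is correct and is essentially the paper's own argument: the paper likewise bounds $[\cl(K,\aaa):\cl(K)]$ via the exact sequence \eqref{rayclasseq} together with the filtration exhibiting $(\OO_K/\qq^n)^\times$ as an extension of $(\OO_K/\qq)^\times$ by the $q$-group $1+\qq\OO_K$ modulo $1+\qq^n\OO_K$, with the residue field $\FF_q$ versus $\FF_{q^2}$ accounting for the two cases. Your version merely makes the filtration quotients and the order count $(q-1)q^{n-1}$ versus $(q^2-1)q^{2(n-1)}$ explicit, which the paper leaves implicit.
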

\begin{proof}
Immediate from sequence \eqref{rayclasseq} in light of the exact sequence
\begin{equation}\label{pro2}1 \to 1 + \qq^n \OO_K \to 1 + \qq \OO_K \to (\OO_K/\qq^n)^\times \onto (\OO_K/\qq)^\times \to 1,
\end{equation} combined with the fact that $1 + \qq \OO_K$ is pro-$q$.
\end{proof}

\begin{cor}\label{2unram}
\begin{enumerate}
\item If $2$ ramifies or splits in $K$, then any Hecke character ${\psi: G_K \to \FF^\times}$ of modulus $2^n \OO_K$ has trivial conductor and hence factors through $\cl(K)$.
\item If $2$ is inert in $K$, then any Hecke character $\psi: G_K \to \FF^\times$ of modulus $2^n \OO_K$ has conductor dividing $2 \OO_K$ and hence factors through $\cl\!\big(K, (2)\big)$.
\end{enumerate}
\end{cor}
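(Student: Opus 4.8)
The plan is to exploit the one structural feature that makes characteristic $2$ special here: since $\FF$ is a finite extension of $\FF_2$, the group $\FF^\times$ is cyclic of odd order $2^f-1$, so any homomorphism $\psi$ into $\FF^\times$ is trivial on every $2$-subgroup of its source. It therefore suffices, in each case, to show that the kernel of the relevant surjection of ray class groups is a $2$-group; then $\psi$ factors through the quotient automatically. The splitting type of $2$ enters only through the residue fields of the primes above it: these have order $2$ when $2$ ramifies or splits, and order $4$ when $2$ is inert, so the odd part of the local unit group is trivial in the first case and cyclic of order $3$ in the second.

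For part (1), I would apply the exact sequence \eqref{rayclasseq} with $\aaa = 2^n\OO_K$. The kernel of $\cl(K, \aaa) \onto \cl(K)$ is a homomorphic image of $(\OO_K/\aaa)^\times$, so it suffices to check that this last group is a $2$-group. When $2$ ramifies, $\aaa = \qq^{2n}$ and the residue field $\OO_K/\qq$ is $\FF_2$, so $(\OO_K/\qq)^\times$ is trivial; combined with the pro-$2$ filtration \eqref{pro2}, the group $(\OO_K/\qq^{2n})^\times$ is a $2$-group (equivalently, Lemma~\ref{onlyeven} gives index dividing $(2-1)2^k = 2^k$). When $2$ splits, $\aaa = \qq_1^n\qq_2^n$ is not a prime power, so I would first decompose $(\OO_K/\aaa)^\times \cong (\OO_K/\qq_1^n)^\times \times (\OO_K/\qq_2^n)^\times$ by the Chinese remainder theorem; each factor has residue field $\FF_2$ and hence, again by \eqref{pro2}, is a $2$-group. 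Either way $\psi$ kills the kernel and factors through $\cl(K)$, so its conductor is trivial.

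For part (2), $2$ is inert and $\aaa = \qq^n$ with residue field $\FF_4$. Here I would compare the two moduli directly: the reduction maps assemble copies of \eqref{rayclasseq} for $\qq^n$ and for $\qq = (2)$ into a commutative ladder, inducing a surjection $\cl(K, \qq^n) \onto \cl(K, (2))$ whose kernel is a homomorphic image of $\ker\big((\OO_K/\qq^n)^\times \to (\OO_K/\qq)^\times\big) = 1 + \qq\OO_K/\qq^n$. By the pro-$2$ sequence \eqref{pro2} this group of principal units is a $2$-group, so $\psi$ is again trivial on the kernel and factors through $\cl(K, (2))$; equivalently, its conductor divides $2\OO_K$.

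The only genuine obstacle is bookkeeping rather than mathematics. Lemma~\ref{onlyeven} is stated for prime-power moduli, so it does not literally cover the split case $2\OO_K = \qq_1\qq_2$, and I must insert the Chinese remainder decomposition by hand. I should also be careful that the numerical bound of Lemma~\ref{onlyeven} alone does not pin down the conductor in the inert case---it only shows the odd part of the kernel has order dividing $3$---so the conductor claim genuinely requires locating that order-$3$ quotient at level $\qq = (2)$, which is precisely what the filtration \eqref{pro2} supplies.
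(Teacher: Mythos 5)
Your proof is correct and follows essentially the same route as the paper's: part (1) via Lemma~\ref{onlyeven} together with the Chinese remainder decomposition $\big(\OO_K/2^n\OO_K\big)^\times \cong (\OO_K/\qq_1^n)^\times \times (\OO_K/\qq_2^n)^\times$ in the split case, and part (2) via the pro-$2$ filtration \eqref{pro2}, which shows the only odd contribution to $[\cl(K,(2)^n):\cl(K)]$ occurs at $n=1$. Your explicit framing---that $\FF^\times$ has odd order, so it suffices to show each kernel is a $2$-group---is exactly the mechanism the paper leaves implicit, and your ladder of ray class groups in part (2) is just an expanded version of the paper's one-line appeal to \eqref{pro2}.
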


\begin{proof}
\begin{enumerate}
\item If $2$ ramifies in $K$, then this follows immediately from Lemma \ref{onlyeven}, since ${(q-1)q^n}$ is a power of $2$. If $2$ splits as $2\OO_K = \pp \pp'$, then argue as in Lemma \ref{onlyeven}, noting that $\big(\OO_K/(2 \OO_K)^n\big)^\times = (\OO_K/\pp^n)^\times \times \left(\OO_K/{\pp'}^n\right)^\times$ by the Chinese remainder theorem.
\item From the proof of Lemma \ref{onlyeven} and sequence \eqref{pro2}, it's clear that the only odd contribution to $[\cl\!\big(K, (2)^n\big): \cl(K)]$ comes at $n = 1$. 
\end{enumerate}
\vspace{-12pt}
\end{proof}

\subsubsection{The local behavior of $\rho$}\label{locallyred}
Fixing an embedding $\iota: G_{\QQ_2} \into G_\QQ$, we can consider the restriction $\rho_{2}$ of $\rho$ to $G_{\QQ_2}$.
Let $\pp$ be the prime of $\OO_K$ above $2$ corresponding to $\iota$, and let $\psi_2$ be the restriction of $\psi$ to $G_{K_\pp}$. Then $\rho_2$ is reducible if and only if either
\begin{enumerate}
\item $2$ splits in $K$, or
\item $2$ is inert or ramified in $K$ and ${}^{\sigma} \psi_2  = \psi_2$.\\ (Note that $\sigma$ is in the decomposition group at $\pp$ in this case.)
\end{enumerate}

\subsection{Mod-$2$ dihedral Galois representations of prime conductor}\label{2scenarios}
Retaining the notation ($\FF$, $\rho$, $K$, $\psi$) from the previous subsection, we now additionally suppose that $N$ is an odd prime and $\rho$ has (tame Artin) conductor $N$.
The induced tame conductor formula \eqref{tameartincond} guarantees that either
\begin{equation*}
\Delta^{(2)}_K = (1), \,\mathcal N^K_\QQ(\cond^{(2)}\! \psi) = (N) \quad
\mbox{or} \quad
\Delta^{(2)}_K = (N), \,\mathcal N^K_\QQ(\cond^{(2)}\! \psi) = (1).
\end{equation*}
We analyze each scenario in turn.

\subsubsection{First scenario: $\Delta^{(2)}_K = (1) \mbox{ and } \mathcal N^K_\QQ(\cond^{(2)} \psi) = (N)$}

Here, $K = \QQ(i)$ or $\KK{\pm2}$, and  $N$ splits in $K$ as $(N) = \qq \qq'$ with $\cond^{(2)}\psi = \qq$. Hence $\psi$ is a ray class character of conductor $\qq \aaa$ for some ideal $\aaa$ of $K$ divisible only by primes above~$2$. 

\begin{lemma}\label{baddet}
In this scenario, $\det \rho: G_\QQ \to \FF^\times$ is a nontrivial character.
\end{lemma}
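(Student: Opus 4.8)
The plan is to argue by contradiction, invoking Lemma~\ref{det1}. Suppose $\det\rho = 1$. The computation carried out in the proof of Lemma~\ref{det1} shows that this forces ${}^\sigma\psi = \psi^{-1}$, where $\{1,\sigma\} = \Gal(K/\QQ)$; I would take this identity as the starting point and derive a contradiction from the conductor data specific to the first scenario.

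The key point is that the two sides of ${}^\sigma\psi = \psi^{-1}$ have incompatible prime-to-$2$ conductors. Inversion preserves the conductor, so $\cond^{(2)}(\psi^{-1}) = \cond^{(2)}\psi = \qq$. Twisting by $\sigma$, on the other hand, transports the conductor by $\sigma$, giving $\cond^{(2)}({}^\sigma\psi) = \sigma\!\big(\cond^{(2)}\psi\big) = \sigma(\qq)$. Since $N$ splits in $K$ as $(N) = \qq\qq'$ with $\qq \neq \qq'$, the nontrivial automorphism $\sigma$ interchanges the two primes above $N$, so $\sigma(\qq) = \qq'$. Hence ${}^\sigma\psi = \psi^{-1}$ would force $\qq = \qq'$, a contradiction; therefore $\det\rho$ is nontrivial.

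Since this argument is short, there is no substantial obstacle beyond the conductor bookkeeping: one must confirm that $\cond^{(2)}({}^\sigma\psi) = \sigma(\cond^{(2)}\psi)$ and that $\cond^{(2)}(\psi^{-1}) = \cond^{(2)}\psi$. Both are standard functorial properties of the Artin conductor under Galois conjugation and inversion, and passing to the prime-to-$2$ part is harmless because $\sigma$ permutes the primes above $2$ among themselves and fixes the set of odd primes setwise. The auxiliary factor $\aaa$ of $\cond\psi$ (supported above $2$) plays no role, precisely because we compare only the odd parts of the conductors. Everything then reduces to the elementary fact that a split prime factors into two distinct primes.
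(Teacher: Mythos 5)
Your proof is correct and is essentially the paper's own argument: the paper observes that $\cond\psi$ is not Galois-invariant, deduces that $L_\psi$ is not Galois over $\QQ$, and concludes by the contrapositive of Lemma~\ref{det1}, whereas you have simply inlined that lemma's key computation ${}^\sigma\psi = \psi^{-1}$ and expressed the same obstruction as the conductor comparison $\sigma(\qq) = \qq' \neq \qq$ rather than as a statement about the field $L_\psi$. The conductor bookkeeping you check (invariance under inversion, equivariance under $\sigma$, harmlessness of the part supported above $2$) is exactly what underlies the paper's phrase ``$\cond\psi$ is not Galois-invariant,'' so the two proofs differ only in packaging.
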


\begin{proof}
Since $\cond\psi$ is not Galois-invariant, $L_\psi$ is not Galois over $\QQ$. Lemma \ref{det1} then implies the desired conclusion.
\end{proof}

\subsubsection{Second scenario: $\Delta^{(2)}_K = (N) \mbox{ and } \mathcal N^K_\QQ(\cond^{(2)} \psi) = (1)$}\label{scenario2}

Here, $K = \KNpm$ or $\KK{\pm 2N}$ and $\psi$ is a ray class character of conductor dividing $(2 \OO_K)^n$.

\begin{cor}\label{factorthruH}
In this scenario, $\psi$ factors through $\cl(K)$ unless
\begin{itemize}
\item $N \equiv 5 \cmod{8}$ and $K = \KNp$ or
\item $N \equiv 3 \cmod{8}$ and $K = \KNm$,
\end{itemize}
in which cases $\psi$ factors through $\cl\!\big(K, (2)\big).$
\end{cor}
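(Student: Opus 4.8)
The plan is to reduce the entire statement to Corollary~\ref{2unram}, which already prescribes the behavior of a Hecke character of $2$-power modulus according to the splitting type of the prime $2$ in $K$. In this scenario $\cond^{(2)}\psi = (1)$ and $\psi$ has conductor dividing $(2\OO_K)^n$, so the only question is whether $2$ is inert, split, or ramified in each of the four candidate fields $\KNpm$ and $\KK{\pm 2N}$. By Corollary~\ref{2unram}, if $2$ splits or ramifies then $\psi$ factors through $\cl(K)$, while if $2$ is inert then $\psi$ factors through $\cl(K, (2))$; so the whole corollary amounts to identifying precisely the fields in which $2$ is inert, as a function of $N \cmod 8$.

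First I would dispose of $K = \KK{\pm 2N}$: here the radicand is $\equiv 2 \cmod 4$, so the discriminant $\Delta_K$ is divisible by $8$ and $2$ ramifies. Corollary~\ref{2unram}\,(1) then shows that $\psi$ factors through $\cl(K)$, and no exception arises from these two fields.

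It remains to treat $K = \KNpm$, where I would apply the standard splitting law for $2$ in $\QQ(\sqrt d)$ with $d$ squarefree: if $d \equiv 2, 3 \cmod 4$ then $2$ ramifies, whereas if $d \equiv 1 \cmod 4$ then $2$ splits when $d \equiv 1 \cmod 8$ and is inert when $d \equiv 5 \cmod 8$. Running through the residues of $N$ modulo $8$: for $\KNp$ (so $d = N$) the prime $2$ is inert exactly when $N \equiv 5 \cmod 8$, and ramifies or splits otherwise; for $\KNm$ (so $d = -N$) the prime $2$ is inert exactly when $N \equiv 3 \cmod 8$, and ramifies or splits otherwise. Feeding these into Corollary~\ref{2unram} yields factorization through $\cl(K)$ in every non-inert case and through $\cl(K, (2))$ in the two inert cases, which are precisely the two exceptional cases in the statement.

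There is no genuine obstacle here beyond careful bookkeeping; the only point requiring attention is the interaction of the sign in $\KNpm$ with the residue of $N$, since negating the radicand swaps the residues $3$ and $5$ (and $1$ and $7$) modulo $8$, which is exactly why the two exceptional cases land on opposite signs.
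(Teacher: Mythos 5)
Your proposal is correct and is essentially the paper's own proof: the paper likewise deduces the statement by combining Corollary~\ref{2unram} with the splitting behavior of $2$ in $\KNpm$ (recorded in Table~\ref{ordersparity}), with the inert cases $N \equiv 5 \cmod{8}$, $K = \KNp$ and $N \equiv 3 \cmod{8}$, $K = \KNm$ giving the two exceptions. Your explicit dismissal of $K = \KK{\pm 2N}$ via ramification of $2$ is a point the paper leaves implicit, but it is the same bookkeeping argument.
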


\begin{proof} Combine Corollary \ref{2unram} with the ramification of $2$ in $\KNpm$: see Table~\ref{ordersparity}.
\end{proof}

\begin{table}[ht]
\caption{Class number parity and splitting of $2$ in $\KNpm$ for $N$ prime.}
\label{ordersparity}.

\begin{tabular}{c||c|c|c||c|c|c}
\multirow{2}{*}{$N$ mod $8$}
& \multicolumn{3}{c||} {$K = \KNp$}  & \multicolumn{3}{c} {$K = \KNm$} \\
\cline{2-7}
& $(2)$ in $K$ & $h(K)$ & $\#\langle \pp(K) \rangle$ &
$(2)$ in $K$ & $h(K)$ & $\#\langle \pp(K) \rangle$\\
\hline\hline
1 &
splits & odd & varies &
ramifies & even $> 4$ & 2 \\
\hline
3 &
ramifies & odd & 1 &
inert & odd & 1\\
\hline
5 &
inert & odd & 1 &
ramifies & $2\cdot$odd & 2\\
\hline
7 &
ramifies & odd & 1 &
splits & odd & varies
\end{tabular}

\end{table}

\subsection{Mod-$2$ modular Galois representations of weight 2}

We now suppose that $N$ is an odd integer (not necessarily prime) and $f \in S_2(\Gamma_0(N), \bar\ZZ_2)$ is a normalized weight-$2$ Hecke eigenform of level $N$. By a theorem of Breuil--Conrad--Diamond--Taylor \cite{bcdt}, such $f$ with coefficients in $\QQ$ correspond precisely to isogeny classes of elliptic curves $E$ of conductor $N$, with the $\ell^{\rm th}$ Fourier coefficient satisfying $a_\ell(f) = \ell + 1 - \#E(\FF_\ell)$ for all primes $\ell \nmid 2N$. As for elliptic curves, the form $f$ is \emph{ordinary} or \emph{supersingular} according to whether $a_2(f)$ is a unit in $\bar \ZZ_2$. Reducing any $G_\QQ$-stable lattice of the Galois representation associated by Eichler and Shimura to $f$, we obtain a mod-$2$ representation $\rho_f: G_{\QQ} \to \SL_2(\bar\FF_2)$ which for prime $\ell \nmid 2N$ is unramified at $\ell$ and satisfies $\Trace \rho_f(\frob_\ell) = \bar a_\ell(f)$, where $\bar a_\ell(f) \in \bar \FF_2$ is the mod-$2$ reduction of $a_\ell(f)$. If $f$ corresponds to an elliptic curve $E$ (up to isogeny,) then $\rho_f$ is the representation $\rho_{E, 2}$ (up to semisimplification) discussed in section \ref{ecsearchintro}. 

Fixing a prime of $\bar \QQ$ above $2$, we consider the corresponding decomposition group of $G_\QQ$, which one can identify with the absolute Galois group $G_{\QQ_2} = \Gal(\bar\QQ_2 / \QQ_2)$ of the local field $\QQ_2$. The following theorem relates the shape of the local representation $\rho_{f, 2} := \left. \rho_f \right|_{G_{\QQ_2}}$ to the invertibility of $a_2(f)$. In the statement and the proof, $\QQ_{p^2}$ refers to the unique unramified degree-$2$ extension of $\QQ_p$.

\begin{theorem}[Deligne, Fontaine, Edixhoven, Serre]\label{delignefontaine}
One of the following holds.
\begin{enumerate}
\item \label{dford} ${\rho}_{f, 2}$ is reducible, in which case $f$ is ordinary, and
$${\rho}_{f, 2} \sim \begin{pmatrix} \lambda^{-1} & \ast \\ 0 & \lambda \end{pmatrix},$$
where $\lambda: G_{\QQ_2} \to \FFtbar^\times$ is the unramified character sending $\frob_2$ to $\bar a_2(f)$.\\ Moreover $\rho_{f, 2}$ is at most peu wildly ramified in the sense of Serre.\footnote{An extension $M/\QQ_p$ is \emph{at most peu wildly ramified} if $M = M^{\tr}(\alpha_1^{1/p}, \ldots, \alpha_d^{1/p})$, where $M^\tr/\QQ_p$ is the at most tamely ramified subextension of $M$, and the $\alpha_i$ can be taken to be units in $M^\tr$. If $M$ is still an elementary $p$-extension of $M^\tr$ but at least one of the $\alpha_i$ must be a nonunit, then $M$ is \emph{tr\`es wildly ramified}. See \cite[2.4.ii]{serre:conj}. A representation of $\Gal(\bar \QQ_p/\QQ_p)$ as usual inherits the ramification properties of the fixed field of its kernel.}

\item \label{dfss} ${\rho_{f, 2}}$ is irreducible, in which case $f$ is supersingular. In this case, $\rho_{f, 2}$ is the induction of a character of $G_{\QQ_{4}}$ (the second fundamental character) and is therefore at most tamely ramified.
\end{enumerate}
\end{theorem}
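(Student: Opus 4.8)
The plan is to reduce everything to the standard $p$-adic local description of the Galois representation attached to $f$ and then specialize to $p=2$, $k=2$. Since $2 \nmid N$, the $2$-adic representation $\rho_f^{(2)}\colon G_\QQ \to \GL_2(\bar\QQ_2)$ produced by Eichler--Shimura is crystalline at $2$ with Hodge--Tate weights $\{0,1\}$, and the characteristic polynomial of its crystalline Frobenius is $X^2 - a_2(f)X + 2$. The entire dichotomy is governed by whether $a_2(f)$ is a $2$-adic unit: ordinary ($\bar a_2(f) \neq 0$) versus supersingular ($\bar a_2(f) = 0$). First I would record that $\det \rho_f$ is the mod-$2$ cyclotomic character, which is trivial because $\FFt^\times = 1$; this is consistent with $\rho_f$ landing in $\SL_2$ and will pin down one diagonal character in terms of the other.

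In the ordinary case I would invoke Deligne's theorem on the shape of the ordinary local representation: when $a_2(f)$ is a unit, $\rho_f^{(2)}|_{G_{\QQ_2}}$ is reducible with unramified quotient character sending $\frob_2$ to the unique unit root $\alpha$ of $X^2 - a_2(f)X + 2$. Reducing mod $2$ gives an unramified quotient $\lambda$ with $\lambda(\frob_2) = \bar\alpha = \bar a_2(f)$, and since $\det \rho_f = 1$ the complementary character is $\lambda^{-1}$, yielding the asserted shape. For the refinement that $\rho_{f,2}$ is at most peu wildly ramified, I would use that $f$ arises from an abelian variety quotient $A_f$ of $J_0(N)$ with good reduction at $2$, so the lattice underlying $\rho_f$ comes from the $2$-torsion of a finite flat group scheme over $\ZZ_2$; the criterion of Serre (\cite{serre:conj}, 2.4) and Edixhoven relating finite flatness to peu-ramifié extensions then forces the extension class $\ast$ to be peu ramifiée.

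In the supersingular case I would appeal to Fontaine's theorem, in its mod-$p$ form (valid since $2 \le k \le p+1$ for $p=2$, $k=2$): when $\bar a_2(f) = 0$, the representation $\rho_{f,2}$ is irreducible and induced from a character $\psi$ of $G_{\QQ_4}$ whose restriction to inertia is the level-$2$ fundamental character raised to the power $k-1 = 1$. Thus $\rho_{f,2} \cong \Ind_{G_{\QQ_4}}^{G_{\QQ_2}}\psi$ with $\psi|_I = \omega_2$ the second fundamental character (and its conjugate $\omega_2^2$ on the other eigenline). Since the fundamental characters factor through the tame quotient, the induction is at most tamely ramified, which is the final assertion.

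The main obstacle I anticipate is not the algebra but the delicate $p=2$ boundary case of the peu-ramifié statement: Raynaud's theory of finite flat group schemes is cleanest when the absolute ramification index satisfies $e < p-1$, whereas here $e = 1 = p-1$, so one must use the sharper forms of Serre's and Edixhoven's results that remain valid at this boundary. A secondary point of care is that the theorem concerns an honest lattice reduction $\rho_f$ rather than its semisimplification, so in the ordinary case the extension class $\ast$ is genuinely part of the data and must be controlled through the group-scheme structure rather than discarded.
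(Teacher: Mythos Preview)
Your proposal is correct and follows essentially the same route as the paper: invoke Deligne's description of the ordinary local representation for the upper-triangular shape, deduce peu ramifi\'e from finite flatness of the $2$-torsion of the abelian variety attached to $f$ (the paper cites Edixhoven, Proposition~8.2, precisely to handle the $e=p-1$ boundary case you flag), and in the supersingular case use Fontaine's description as an induction of the second fundamental character, which is tame. The paper's proof is essentially a list of the same citations (Edixhoven, Serre) that your outline unpacks.
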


\begin{proof}
Write $p$ in place of $2$ to avoid confusion with weight $2$. For the shape of $\rho_{f, p}$, see Edixhoven \cite[Theorems 2.5, 2.6]{edix}. In the ordinary case, since $f$ has level prime to $p$ and weight $2$, $\rho_{f, p}$
is \emph{finite at $p$}: it arises from a finite flat group scheme over $\bar\ZZ_p$ (the  $p$-torsion of a certain abelian variety of $\mathrm{GL}_2$-type), forcing $\rho_{f, p}$ to be at most peu wildly ramified \cite[Proposition 8.2]{edix}. In the supersingular case, $\rho_{f, 2}$ is at most tamely ramified by \cite[Proposition 4]{serre:ecs}; for the description of $\rho_{f, p}$ as the induction of the second fundamental character of $G_{\QQ_{p^2}}$, see \cite[\S 2.2]{serre:conj}.
\end{proof}

\section{Mod-2 dihedral representations appearing in weight 2}

Before proving Theorem \ref{ec2}, we state an analogous theorem for cuspforms of weight 2: see Theorem \ref{mf2} below. As many of the arguments are identical, the two theorems will be proved together in section \ref{sec:proof of Theorem ec2}. 

For $N$ an odd squarefree positive integer, we study the distribution of generalized $T_2$-eigenvalues on $S_2(\Gamma_0(N), \bar \FF_2)^\new$. Write $m(N)$ for the dimension of this space. For $\alpha \in \bar \FF_2$, write $m(N, \alpha)$ for the dimension of the generalized kernel of $T_2 - \alpha$ on this space (i.e., the dimension of the generalized eigenspace corresponding to $T_2$-eigenvalue $\alpha$). Let $m_{\ord}(N) := m(N) - m(N, 0)$, the dimension of the \emph{ordinary} subspace. Our aim will be to give lower bounds on $m_\ord(N)$, $m(N, 1)$, and $m(N, 0)$ by enumerating dihedral forms with multiplicities. Note that, for squarefree $N$, forms defined over $\FF_2$ will be either dihedral or reducible (that is, the analog of Proposition \ref{P:mod 2 rep for squarefree conductor} holds).

To this end, write $S_2(N) := S_2(\Gamma_0(N), \bar \FF_2)^\new$ and let $\TT_2(N) := \TT_2(N, \bar\FF_2)^\new$ be the shallow Hecke algebra acting on $S_2(N)$. In other words, $\TT_2(N)$ is the (commutative) $\bar\FF_2$-algebra generated inside ${\rm End}_{\bar \FF_2} \big(S_2(N) \big)$ by the action of all the Hecke operators $T_n$ with $n$ prime to $2N$. Then $\TT_2(N)$ is a semilocal artinian ring whose maximal ideals $\mm$ correspond to mod-$2$ Hecke eigensystems appearing in $S_2(N)$. For $\ell$ prime to $2N$, let $a_\ell(\mm) \in \bar\FF_2$ be the $T_\ell$-eigenvalue corresponding to $\mm$; note that $\mm$ is generated by the $T_\ell - a_\ell(\mm)$ for $\ell \nmid 2N$. By Serre reciprocity (a/k/a Serre's conjecture \cite{serre:conj}, now known by work of Khare--Wintenberger \cite{kw1, kw2}, Kisin \cite{kisin},
and Dieulefait \cite{dieulefait}), the maximal ideals $\mm$ also correspond to semisimple Galois representations $\rho_\mm: G_{\QQ, 2N} \to \SL_2(\bar\FF_2)$ that are at most peu wildly ramified at $2$. 
The correspondence is codified by the Eichler-Shimura relation $a_\ell(\mm) = \tr \rho_\mm(\frob_\ell)$. Theorem \ref{delignefontaine} implies that given $\mm$, one can determine whether $a_2(\mm)$ is $0$ or $1$; otherwise $a_2(\mm)$ is only defined up to inverse.\footnote{Note that $a_2(\mm)$ is not in general the trace of a Frobenius element at $2$ of the $\rho_\mm$ corresponding to $\mm$ (indeed, $\rho_\mm$ may be ramified at $2$). Therefore $a_2(\mm)$ is not a priori determined by~$\mm$. In fact, $a_2(\mm)$ may not even be defined over the field of definition of $\rho_\mm$. This happens, for example, in level 257 for the $\KK{257}$-dihedral Galois orbit of forms.}

We decompose $\TT_2(N)$ as a product of localizations at its maximal ideals,
and correspondingly decompose of $S_2(N)$ into generalized $\mm$-eigenspaces $S_2(N)_\mm$:
$$\TT_2(N) = \prod_{\mm} \TT_2(N)_\mm, \qquad S_2(N) = \bigoplus_{\mm} S_2(N)_\mm.$$
Note that if $\mm \subset \TT_2(N)$ is a maximal ideal, then the eigenspace $S_2(N)[\mm]$ is nonzero, so that the dimension of the generalized eigenspace $S_2(N)_\mm$ is at least $1$.

We say that a maximal ideal $\mm$ of $\TT_2(N)$ is \emph{reducible}, \emph{dihedral}, \emph{exceptional}, or \emph{big-image} if $\rho_\mm$ has the corresponding property. Similarly, we say that $\mm$ is \emph{supersingular} or \emph{ordinary} if $\rho_\mm$ is so at $2$.

We determine the fields $K$ for which there exist $K$-dihedral $\mm$ occurring in $\TT_2(N)$ for $N$ prime and how many such $\mm$ there are (Theorem \ref{mf2} below). In section~\ref{sec:mult}, we study the multiplicity of $S_2(N)_\mm$ in each case (Conjecture \ref{mfmult} and Proposition \ref{mfmultproof}).

\begin{theorem}\label{mf2}
Let $N$ be an odd prime, and $\mm \subset \TT_2(N)$ a maximal ideal.
\begin{enumerate}
\item If $\mm$ is dihedral, then it is either $\KNp$-dihedral or $\KNm$-dihedral.
\item \label{countord} {\bf Ordinary dihedrals:} For $K = \KK{\pm N}$, there are exactly $\displaystyle \frac{h(K)^{\odd} - 1}{2}$ ordinary $K$-dihedral maximal ideals in $\TT_2(N)$.
Of these,
$\displaystyle \frac{h(K)^{\odd, \twosplit} - 1}{2}$
have $a_2(\mm) = 1$.
\vspace{1pt}
\item {\bf Supersingular dihedrals.}
\begin{enumerate}
\item If $\mm$ is supersingular $K$-dihedral, then either $N \equiv 3 \cmod{8}$ and ${K = \KNm}$, or $N \equiv 5 \cmod{8}$ and $K = \KNp$.
\item\label{countss3} Let $N \equiv 3 \cmod{8}$ and $K = \KNm$. If $N > 3$, then there are exactly $h(K)$ supersingular maximal ideals of $\TT_2(N)$.
\item \label{countss5} Let $N \equiv 5 \cmod{8}$ and $K = \KNp$. If $u(K) \equiv 1 \cmod{2\OO_K}$, then there are $h(K)$ supersingular maximal ideals of $\TT_2(N)$; otherwise, there are none.
\end{enumerate}
\item \label{1reduce} {\bf Reducibles:} If $N \equiv 1 \cmod{8}$, then there is one reducible maximal ideal of $\TT_2(N)$, generated by $T_\ell$ for every prime $\ell \nmid 2N$; otherwise, there are none.
\end{enumerate}
\end{theorem}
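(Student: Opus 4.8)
The strategy is to set up a bijection between $K$-dihedral maximal ideals $\mm \subset \TT_2(N)$ and certain orbits of ray class characters of $K$, then count those orbits, handling the four parts in turn. Throughout I would use Serre reciprocity to pass freely between $\mm$ and its semisimple Galois representation $\rho_\mm: G_{\QQ, 2N} \to \SL_2(\bar\FF_2)$, which is at most peu wildly ramified at $2$ and has $\det \rho_\mm = 1$. The analysis in section~\ref{2scenarios} already tells us that if $\rho_\mm$ is $K$-dihedral with conductor $N$, then $K$ is one of the fields enumerated there, and the two scenarios are distinguished by whether $\Delta_K^{(2)} = (1)$ or $(N)$. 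Part (1) would follow by eliminating the first scenario: by Lemma~\ref{baddet} the first scenario forces $\det\rho$ to be nontrivial, contradicting $\det\rho_\mm = 1$; hence we are in the second scenario, so $K = \KNpm$ or $\KK{\pm 2N}$. The fields $\KK{\pm 2N}$ are ramified at $2$, so I would argue that their $\Delta_K^{(2)} = (N)$ forces the discriminant bookkeeping to be consistent only for $\KNpm$ (equivalently, one checks the tame conductor really is $N$ and not $2N$), leaving exactly $\KNp$ and $\KNm$.

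For part (2), the ordinary count, I would use Corollary~\ref{factorthruH}: since $\det\rho_\mm = 1$ and $K = \KNpm$ is one of the unramified/split cases at $2$ in the ordinary regime, $\psi$ factors through $\cl(K)$. By Lemma~\ref{det1} (with $\det\rho = 1$) we have ${}^\sigma\psi = \psi^{-1}$, so each $\mm$ corresponds to an unordered pair $\{\psi, \psi^{-1}\}$ of odd-order characters of $\cl(K)$ with $\psi \neq \psi^{-1}$ (the inequality is exactly the irreducibility condition from section~\ref{inducedrep}\eqref{whenirred}, i.e. $\psi \neq {}^{\bar g}\psi$). The number of nontrivial odd-order characters of $\cl(K)$ is $h(K)^{\odd} - 1$, and pairing $\psi$ with $\psi^{-1}$ gives $\tfrac{h(K)^{\odd}-1}{2}$ such ideals. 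To identify which have $a_2(\mm) = 1$, I would invoke section~\ref{locallyred} together with Theorem~\ref{delignefontaine}: $a_2(\mm) = 1$ precisely in the ordinary case, where $\lambda$ sends $\frob_2$ to $\bar a_2 = 1$; this is governed by the behavior of $\psi$ at the prime above $2$, and reduces to counting characters whose associated $\mm$ is split at $2$, giving the superscript $\twosplit$ count $\tfrac{h(K)^{\odd,\twosplit}-1}{2}$.

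For part (3), the supersingular dihedrals, the distinguishing feature is that $2$ must be inert or ramified with ${}^\sigma\psi_2 \neq \psi_2$ (the reverse of the condition in section~\ref{locallyred}); combined with Table~\ref{ordersparity} this isolates exactly $N \equiv 3$ with $\KNm$ (where $2$ is inert) and $N \equiv 5$ with $\KNp$ (where $2$ is inert). Here Corollary~\ref{factorthruH} says $\psi$ may factor through $\cl(K,(2))$ rather than $\cl(K)$, so the count involves the larger ray class group; I would count pairs $\{\psi,\psi^{-1}\}$ of characters of $\cl(K,(2))$ subject to the ramification-at-$2$ condition, and the exact-sequence analysis \eqref{rayclasseq} relating $\cl(K,(2))$ to $\cl(K)$ would convert this to an expression in $h(K)$. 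The congruence $N \equiv 3 \cmod 8$ with $h(K)$ odd (Table~\ref{ordersparity}) should produce exactly $h(K)$ ideals, while in the $N \equiv 5$ case the unit condition $u(K) \equiv 1 \cmod{2\OO_K}$ controls whether the extra ray class characters survive — this is the content of the exact sequence \eqref{rayclasseq}, where the image of $\OO_K^\times$ in $(\OO_K/2\OO_K)^\times$ either is or is not trivial. Finally part (4) is the Eisenstein analysis: reducible $\mm$ with $\det\rho_\mm = 1$ and $\rho_\mm$ peu wildly ramified corresponds to $\rho_\mm = 1 \oplus 1$ (the trivial reducible representation), whose existence at prime level $N$ is exactly the Eisenstein ideal, which by Mazur~\cite{mazur} is nontrivial in $S_2(\Gamma_0(N))$ if and only if $N \equiv 1 \cmod 8$ (the relevant congruence controlling the $2$-part of the numerator of $\tfrac{N-1}{12}$); this gives the single reducible ideal generated by all $T_\ell$ with $\ell \nmid 2N$.

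I expect the main obstacle to be part (3)(c), the supersingular count for $N \equiv 5 \cmod 8$: pinning down exactly how the fundamental unit condition $u(K) \equiv 1 \cmod{2\OO_K}$ governs the image of $\OO_K^\times$ in sequence \eqref{rayclasseq}, and hence whether the ray class group $\cl(K,(2))$ genuinely acquires the odd-order characters needed to produce supersingular ideals, will require the most care. The subtlety noted in the footnote — that $a_2(\mm)$ need not be defined over the field of $\rho_\mm$ — also signals that matching the local supersingular shape of Theorem~\ref{delignefontaine}\eqref{dfss} to the global character-theoretic count is the delicate step.
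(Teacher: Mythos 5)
Your overall route---Serre reciprocity to identify $\mm$ with $\rho_\mm$, the two-scenario tame-conductor analysis of subsection~\ref{2scenarios}, elimination of the first scenario via Lemma~\ref{baddet} and $\det\rho_\mm = 1$, counting unordered pairs $\{\psi, \psi^{-1}\}$ of odd-order ray class characters (using ${}^\sigma\psi = \psi^{-1}$), the exact sequence \eqref{rayclasseq} comparing $h(K,(2))$ with $h(K)$, the image of $\OO_K^\times$ in $(\OO_K/2\OO_K)^\times$ controlling the $N \equiv 5 \cmod{8}$ supersingular case, and Mazur's Eisenstein criterion for part~\eqref{1reduce}---is the paper's proof nearly step for step. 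But there is one genuine gap: your elimination of $K = \KK{\pm 2N}$ in part~(1) would fail. These fields have discriminant $\pm 8N$, so $\Delta^{(2)}_K = (N)$, and for $\psi$ ramified only above $2$ the induced representation $\Ind_K^\QQ \psi$ has tame conductor exactly $N$, not $2N$: the ``discriminant bookkeeping'' is perfectly consistent with $\KK{\pm 2N}$, which is precisely why subsection~\ref{2scenarios} lists $\KK{\pm 2N}$ alongside $\KNpm$ in the second scenario. The correct elimination is by the \emph{quality} of ramification at $2$: $\KK{\pm 2N}$ is tr\`es wildly ramified at $2$ \cite[2.6, Exemple]{serre:conj}, while Theorem~\ref{delignefontaine} forces the field cut out by $\rho_f$ (which contains $K$, since the projective image of $G_K$ has index $2$ in that of $G_\QQ$) to be at most peu wildly ramified at $2$ in the ordinary case and at most tamely ramified in the supersingular case. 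Without this input nothing in your plan excludes $\KK{\pm 2N}$-dihedral ideals, and every subsequent count would be contaminated.

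Two smaller imprecisions, both repairable. First, in part~\eqref{countord}, Corollary~\ref{factorthruH} alone does not show that an ordinary $\psi$ factors through $\cl(K)$: in the inert cases ($N \equiv 5 \cmod{8}$ with $K = \KNp$, and $N \equiv 3 \cmod{8}$ with $K = \KNm$) it only gives factoring through $\cl(K,(2))$, and you need the local argument the paper supplies---ordinariness makes $\rho_{f,2} = \Ind_{K_\pp}^{\QQ_2} \psi_2$ reducible, so $\psi_2 = {}^{\sigma_2}\psi_2 = \psi_2^{-1}$, and since $\psi_2$ has odd order this forces $\psi_2 = 1$, in particular unramified---both to get the exact ordinary count and to make the $a_2(\mm)=1$ subcount come out to the $\twosplit$ quantity. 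Second, in part~\eqref{countss3} you never address why $N > 3$ is needed: for $N = 3$ the extra units $\pm\omega, \pm\omega^2$ of $\OO_K$ surject onto $(\OO_K/2\OO_K)^\times \cong \FF_4^\times$ in \eqref{rayclasseq}, so $h(K,(2)) = h(K)$ and the supersingular count $\frac{h(K,(2)) - h(K)}{2}$ vanishes; your stated mechanism handles this correctly, but your plan as written asserts $h(K)$ ideals unconditionally for $N \equiv 3 \cmod{8}$.
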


Note that $h(\KNp)$ is always odd, and $h(\KNm)$ is even only for ${N \equiv 1 \cmod 4}$. Note also that a prime $\pp$ above $2$ of $K = \KK{\pm N}$ has order $1$ or $2$ in the class group unless $N \equiv \eps \cmod{8}$ and $K = \KK{\eps N}$ for $\eps = \pm 1$, so the $\twosplit$ condition is vacuous outside those two cases.

\section{Proofs of theorems}
\label{sec:proof of Theorem ec2}

We prove the various parts of Theorems \ref{ec2} and \ref{mf2} in parallel. We then adapt the ideas to recover the theorems of Hadano (Theorem~\ref{hadano}) and Kida (Theorem~\ref{kida}).

\subsection{Proof of parts \eqref{dihedral}} 
Suppose that $f \in S_2(N)$ is a $K$-dihedral modular form for some quadratic extension $K$ of $\QQ$ (corresponding to an elliptic curve for Theorem~\ref{ec2} or to a maximal ideal of the Hecke algebra for Theorem \ref{mf2}). Since $\rho_f$ factors through an extension of $\QQ$ unramified outside of $2$ and $N$, $K$ must be one of the following:
$$\KNp, \KNm, \KK{-1}, \KK{2}, \KK{-2}, \KK{2N}, \KK{-2N}.$$
If $K = \KK{\pm 2N}$, then $K$ is tr\`es wildly ramified at $2$ \cite[2.6, Exemple]{serre:conj}, so no modular forms of weight $2$ (and in particular no elliptic curves) can be $K$-dihedral (Theorem \ref{delignefontaine}).  If $K = \KK{-1}$, $\KK{-2}$, or $\KK{2}$, then we are in the first scenario of subsection \ref{2scenarios}, and Lemma \ref{baddet} guarantees that a $K$-dihedral representation cannot come from a $\Gamma_0(N)$-modular form. Thus $K = \KNpm$, as claimed.

\subsection{Proof of parts \eqref{plusminusN}} \label{sec:proofpart2} Suppose $K = \KNpm$ and $f \in S_2(N)$ is a $K$-dihedral ordinary form, with $\rho = \rho_f = \Ind_K^\QQ \psi$ for some character $\psi$ of $G_K$ ramified only at primes above $2$ (section \ref{scenario2}). Write $H= H(K)$ and $\pp = \pp(K)$. Let $L$ be the fixed field of $\ker \psi$. Since $\det \rho = 1$, by Lemma \ref{det1} the extension $L/\QQ$ is Galois. Choose a prime $\PP$ of $L$ above $\pp$, and write $\psi_2$ for the restriction of $\psi$ to $\Gal(L_\PP/K_\pp)$. 

We first show that $\psi$ is in fact unramified at $2$, and hence will factor through $H^\odd$, the maximal odd-degree subextension of $H$. 
By Corollary \ref{factorthruH} and Table~\ref{ordersparity}, $\psi$ is unramified in all cases except possibly when
$2$ is inert in $K$. In that case,  $\rho_{f,2}= \Ind_{K_\pp}^{\QQ_2} \psi_2$, so by \ref{inducedrep} \eqref{whenirred} we know that $\psi_2 = {}^{\sigma_2} \! \psi_2$ for $\sigma_2$ a  generator of $\Gal(K_\pp/\QQ_2)$. In this case, Theorem \ref{delignefontaine} \eqref{dford} tells us that $\psi_2$ is unramified above $2$, as then is $\psi$. In fact, the determinant condition further forces ${}^{\sigma_2} \! \psi_2 = \psi_2^{-1}$, which implies $\psi_2 = 1$ because we are in characteristic $2$. 

Next, from Theorem~\ref{delignefontaine}~\eqref{dford}, the condition $a_2(f) = 1$ is equivalent to the condition $\psi_2 = 1$, which exactly means that $\psi$ factors through $H^{\odd, \twosplit}$, the maximal odd subextension of $H$ over $K$ in which $2$ splits completely.

To complete the proof of Theorem \ref{ec2} \eqref{plusminusN}, we observe that ${[H^{\odd, \twosplit}: K] = \frac{h(K)^\odd}{\#\langle \pp \rangle}}$. If $\rho$ comes from a $K$-dihedral elliptic curve, then it has image $D_3$ so that $\psi$ must have order $3$. So a $K$-dihedral elliptic curve of conductor $N$ is only possible if $3$ divides $\frac{h(K)^\odd}{\#\langle \pp \rangle}$, or equivalently $\frac{h(K)}{\#\langle \pp \rangle}$.

To complete the proof of Theorem \ref{mf2} \eqref{plusminusN}, we recall that in general, $\Ind_K^\QQ \psi = \Ind_K^\QQ \psi'$ if and only if $\psi = \psi'$ or ${}^\sigma \psi = \psi'$ for $\sigma$ a generator of $\Gal(K/\QQ)$. In our unit-determinant case, ${}^\sigma \psi = \psi^{-1}$. Therefore there are $\frac{h(K)^{\odd} - 1}{2}$ distinct ordinary $K$-dihedral $\rho$, as claimed. The $a_2 = 1$ condition works similarly.

\subsection{Proof of parts \eqref{ss}}
\label{subsec:ss}

Suppose that $K = \KNpm$ and $f \in S_2(N)$ is a $K$-dihedral form with $\rho = \rho_f= \Ind_K^\QQ \psi$ for some character $\psi$ of $G_K$ ramified only at primes above $2$. Maintain the notation $H$, $\pp$, $\sigma$, $\rho_2$ as above. As in the second paragraph of section \ref{sec:proofpart2}, $\psi$ does not factor through $H$ (or else $\rho_2$ would be reducible, contradicting Theorem \ref{delignefontaine}).  
Therefore $\psi$ must be a character of $\cl(K,\aaa)$ for some ideal $\aaa$ of $K$ divisible only by primes above $2$. By Corollary \ref{2unram}, $\aaa = (2)$ and either $N \equiv 3 \cmod{8}$ and $K = \KNm$, or $N \equiv 5 \cmod{8}$ and $K = \KNp$.

Now suppose we are in one of these two cases. Since ${}^\sigma \psi = \psi^{-1}$, the character ${}^\sigma \psi$ will also factor through $H\!\big(K, (2)\big)$ and not through $H$. This gives exactly $\frac{h(K, (2)) - h(K)}{2}$ representations, and hence maximal ideals of $\TT_2(N)$. 

The formulations in part \eqref{countss3} of Theorem \ref{mf2} and part \eqref{countss5} of both theorems come from analyzing the sequence~\eqref{rayclasseq} from the proof of Lemma~\ref{onlyeven}. For $N$ congruent to $3$ modulo $8$, we have $K = \QQ(\sqrt{-N})$, so that $$\OO_K = \begin{cases} \{\pm 1 \} & \mbox{if $N > 3$} \\ \{\pm 1, \pm \omega, \pm \omega^2\} & \mbox{if $N = 3$} \end{cases}$$ for $\omega$ a cube root of unity in $\QQ(\sqrt{-3})$. Since $(2)$ is inert in $K$, we have $\OO_K/(2) = \FF_4$.
Therefore, for $N > 3$ (still congruent to $3$ modulo $8$), sequence \eqref{rayclasseq} becomes
$$\{\pm 1\} \to \FF_4^\times \to H(K, (2)) \to H(K) \to 1,$$ so that $h(K, (2)) = 3 h(K)$. For $N = 3$, on the other hand, the global units exactly cancel out the mod-$(2)$ units, so that $h(K, (2)) = h(K)$. For $N$ congruent to $5$ modulo $8$, we still have $\OO_K/(2) = \FF_4$, but this time $\OO_K = \{\pm 1\} \times u^\ZZ$ for some fundamental unit $u = u(K),$ and therefore we similarly have the two cases $$h(K, (2)) = \begin{cases} 3 h(K) & \mbox{if $u$ maps to $1$ in $\big(\OO_K/(2)\big){}^\times$} \\ h(K) & \mbox{otherwise.} \end{cases}$$

\subsection{Proof of parts \eqref{eisenstein}}
If $N \not \equiv 1 \cmod 8$, then $2$ is not an Eisenstein prime for $N$ (see Mazur \cite{mazur} or Mazur--Serre \cite{mazurserre}), so there are no cuspforms in $S_2(N, \bar\ZZ)$ congruent to the Eisenstein series $E_{2, N}$ modulo $2$, which carries the unique reducible maximal ideal in squarefree level. In particular, there are no rational newforms whose associated mod-$2$ Galois representation is reducible.

This completes the proof of Theorem \ref{ec2} and Theorem \ref{mf2}.

\subsection{Proof of Theorem~\ref{kida}}

By Theorem~\ref{mf2} \eqref{plusminusN}, the condition $3 \nmid h(\KNpm)$
rules out the existence of an ordinary elliptic curve of conductor $N$. 
For a supersingular elliptic curve,
with notation as in the proof of Theorem~\ref{mf2} \eqref{ss}, 
$K = \KK{(-1)^{(N-1)/2} N}$ and $\psi$ is a nontrivial order-3 character of $H(K, (2))$; this is ruled out by assuming that $3 \nmid h(K, (2))$.
This completes the proof of Theorem~\ref{kida}.

\subsection{Proof of Theorem~\ref{hadano}}
\label{subsec:hadano}

We now change notation to address Theorem~\ref{hadano}.
Let $N$ be a prime such that $3 \nmid h(K)$ for $K = \KNpm, \KK{\pm 2N}$,
and let $E$ be an elliptic curve of conductor $2N$. Let $f \in S_2(2N)$ be the corresponding modular form and let $\mm \subseteq \TT_2(2N)$ be the corresponding maximal ideal. 
Since $E$ has multiplicative reduction at 2, $f$ is ordinary and the conclusion of Theorem~\ref{delignefontaine} \eqref{dford} holds. By Proposition~\ref{P:mod 2 rep for squarefree conductor},
$\mm$ is either reducible or ordinary dihedral.

In the reducible case,
$\mm$ is an Eisenstein ideal; by the proof of \cite[Theorem~6.1]{yoo},
the difference of the cusps of $X_0(2N)$ corresponding to $1, 1/2 \in \mathbb{P}^1(\QQ)$ must have even order in the Jacobian. By \cite[Theorem~1.3]{yoo} this order is the numerator of $(N^2-1)/8$, forcing $N \equiv 1, 7 \cmod{8}$.

In the ordinary dihedral case, by Lemma~\ref{baddet} we must be in the second scenario of subsection \ref{2scenarios};
that is, that is, $\rho_f = \Ind_K^\QQ \psi$ where
$K$ is one of $\KK{\pm N}, \KK{\pm 2N}$ and $\psi$ is an order-3 character of $G_K$ ramified only at primes above $2$. 
As in subsection~\ref{sec:proofpart2}, we see that $\psi$ is also unramified at $2$ and so factors through $\cl(K)$;
however, this contradicts the hypothesis that $3 \nmid h(K)$.

This completes the proof of Theorem \ref{hadano}.

\section{Multiplicities of mod-2 dihedral cuspforms in weight 2}\label{sec:mult}

The following conjecture complements Theorem \ref{mf2}. 
Note that the fact that ${\mm \subset \TT_2(N)}$ is a maximal ideal automatically implies that $\dim S_2(N)_\mm \geq 1$.

\begin{conj}\label{mfmult}
Let $N$ be an odd prime and $\mm$ a maximal ideal of $\TT_2(N)$.
\begin{enumerate}
\item Suppose $N \equiv 1 \cmod{8}$.
\begin{enumerate}
\item\label{1pos} If $\mm$ is $\KNp$-dihedral, then $\dim S_2(N)_\mm \geq 4.$
\item\label{1neg} If $\mm$ is $\KNm$-dihedral, then $\dim S_2(N)_\mm \geq h(-N)^\even.$
\item\label{1red} If $\mm$ is reducible, then $\dim S_2(N)_\mm \geq \displaystyle \frac{h(-N)^\even - 2}{2}.$
\end{enumerate}
\item Suppose $N \equiv 5 \cmod{8}$.
\begin{enumerate}
\item\label{5pos} If $\mm$ is ordinary $\KNp$-dihedral, then $\dim S_2(N)_\mm \geq 4.$
\item\label{5neg} If $\mm$ is $\KNm$-dihedral, then $\dim S_2(N)_\mm \geq 2.$
\end{enumerate}
\item \label{37dih} Suppose $N \equiv 3 \cmod{4}$ and $K = \KK{\pm N}$.
\begin{enumerate}
\item\label{37diha} If $\mm$ is ordinary $K$-dihedral, then $\dim S_2(N)_\mm \geq 2$.
\end{enumerate}

\end{enumerate}
\end{conj}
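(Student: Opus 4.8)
The plan is to reduce each lower bound to a computation of the length of the local Hecke algebra $\TT_2(N)_\mm$. Via the $q$-expansion pairing $(f,T)\mapsto a_1(Tf)$, the module $S_2(N)$ is dual to the Hecke algebra, so one expects $\dim_{\bar\FF_2} S_2(N)_\mm$ to equal the length of $\TT_2(N)_\mm$; the one subtlety is that the shallow algebra omits $T_2$ and the operator at $N$, so I would first check that these missing operators do not shrink the generalized eigenspace. For prime level this is governed by Atkin--Lehner at $N$ and, at $2$, by Theorem~\ref{delignefontaine} (this is exactly the phenomenon behind the footnote that $a_2(\mm)$ need not be determined by $\mm$). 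Granting this, the task becomes bounding below the length of the local ring, equivalently counting the characteristic-zero (or mod-$2^k$) eigenforms congruent to the given eigensystem \emph{with} their nilpotent structure.

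Second, I would pass to the Galois side by modularity: $\TT_2(N)_\mm$ is a quotient of a universal deformation ring $R_\mm$ parametrizing lifts of $\rho_\mm$ that are unramified outside $2N$, have prime-to-$2$ conductor dividing $N$, are at most peu wildly ramified at $2$, and have fixed determinant. For dihedral $\mm$ with $\rho_\mm=\Ind_K^\QQ\psi$, I would decompose the adjoint $\mathrm{ad}^0\rho_\mm$ under $\Gal(K/\QQ)$: in the unit-determinant case (where ${}^\sigma\psi=\psi^{-1}$) it breaks into a piece attached to the quadratic character $\eps_K$ of $K/\QQ$ and a piece $\Ind_K^\QQ(\psi^2)$ governed by ray class characters of $K$. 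The relevant constrained cohomology then splits accordingly, and class field theory identifies the summands with subquotients of the $2$-part of $\cl(K)$ and of $\cl(K,(2))$ --- which is exactly where $h(-N)^\even$ should enter for the $\KNm$ cases (items~(\ref{1neg}),~(\ref{37diha})). The constant bounds $\geq 4$ and $\geq 2$ should instead come from the local deformation conditions at $2$, the peu-wild ramification contributing a fixed number of extra tangent directions when $2$ ramifies or is inert.

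Third, for the reducible case (item~(\ref{1red})) I would use Mazur's description of the Eisenstein ideal at prime level, together with the known link between its $2$-adic structure and $\cl(\KNm)$, subtracting the genus-theory contribution to produce $(h(-N)^\even-2)/2$. Here one must track carefully the congruences between the Eisenstein ideal and the $\KNm$-dihedral ideals --- both bounds involve $h(-N)^\even$ --- since in characteristic $2$ these maximal ideals interact, and the clean separation into distinct eigensystems that one has in odd characteristic is unavailable.

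The main obstacle is that every cohomological input above naturally computes a \emph{tangent space}, and hence controls only the $2$-rank of the relevant class group, whereas the conjectured bounds involve the full order $h(-N)^\even$ of its (cyclic, by genus theory) $2$-part --- an exponentially larger quantity. Closing this gap requires producing an entire chain of congruent deformations of the matching length, i.e.\ controlling the relations in $R_\mm$ rather than just its cotangent space; equivalently, one must exhibit enough weight-$2$ forms (visible in characteristic $0$ only at higher power-of-$2$ level, where the wild ramification at $2$ is unhidden) reducing to the single mod-$2$ eigensystem. Because both the Gorenstein property and multiplicity one fail here, and because the Kilford--Wiese examples show that sporadic exceptions to such clean formulas do occur, I expect this last step --- upgrading the tangent-space bound to the full multiplicity while quantifying or excluding the exceptions --- to be the crux, and the reason the statement is offered only as a conjecture.
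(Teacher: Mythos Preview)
The statement is a \emph{conjecture}; the paper does not claim to prove it. What the paper actually establishes is only Proposition~\ref{mfmultproof}, namely part~\eqref{37diha} in the case $K=\KNm$ (and it separately cites Calegari--Emerton for part~\eqref{1red} when $N\equiv 9\cmod{16}$). So you should be comparing your plan against that proposition, not against a full proof of the conjecture.

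For that one case, the paper's argument is entirely different from your deformation-theoretic outline and much more elementary. It works on the automorphic side: for $N\equiv 3\cmod{4}$ and $K=\KNm$, the $\tfrac{h(-N)-1}{2}$ classical $K$-dihedral forms already live in weight~$1$, in $S_1(N,\eps_K,\CC)$, and since $h(-N)$ is odd they reduce to pairwise distinct mod-$2$ eigensystems. The multiplicity~$\geq 2$ then comes from exhibiting two linearly independent Hecke-equivariant maps $S_1(N,\eps_K,\bar\FF_2)\hookrightarrow S_2(N,\bar\FF_2)$: the Frobenius $f\mapsto f^2$ and multiplication by $E_{1,\eps}$ (which lifts the Hasse invariant). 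No Galois cohomology, no $R=\TT$, no analysis of local conditions at~$2$ is used; the entire content is produced by these two explicit weight-raising maps. Your approach, by contrast, would try to see the same~$2$ as the length of a local deformation ring, which is a much heavier route to the same bound and, as you yourself note, does not obviously go beyond the tangent space.

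Your sketch is a reasonable research programme for the remaining parts, and you correctly identify the crux: the bounds in \eqref{1neg} and \eqref{1red} involve the full $2$-power $h(-N)^\even$, not merely its $2$-rank, so a Selmer/tangent-space computation alone cannot reach them. That is indeed why these remain conjectural in the paper. One further gap worth flagging: your opening reduction ``$\dim S_2(N)_\mm$ equals the length of $\TT_2(N)_\mm$ via the $q$-expansion pairing'' is exactly the Gorenstein/multiplicity-one statement that is known to fail at~$2$ in this setting (cf.\ the paper's reference to Kilford--Wiese), so you cannot take it for granted even before getting to the deformation side.
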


In the case that $N \equiv 9 \cmod{16}$, part \eqref{1red} has been proved by Calegari and Emerton \cite[Theorem 1.1]{calegari-emerton-eisenstein}: indeed, they establish that $\dim S_2(N)_\mm = \frac{h(-N)^\even - 2}{2}$ for the unique reducible $\mm$ in this case. 

\begin{prop}\label{mfmultproof}
Part \eqref{37dih} of Conjecture~\ref{mfmult} is true when $K = \KNm$.
\end{prop}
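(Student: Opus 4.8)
My plan is to prove the bound by exhibiting two linearly independent weight-two forms carrying the prime-to-$2$ eigensystem $\mm$. The feature that makes the imaginary case tractable—and that is unavailable for the real field $\KNp$, leaving part~\eqref{37dih} conjectural there—is that for $K=\KNm$ the character $\psi$ underlying $\mm$ is attached to a classical \emph{holomorphic} weight-one theta series, which I will use to seed the construction.

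I would first record the structure of $\mm$. By the proof of Theorem~\ref{mf2}\eqref{plusminusN}, an ordinary $\KNm$-dihedral $\mm$ satisfies $\rho_\mm=\Ind_K^\QQ\psi$ for a nontrivial odd-order character $\psi$ of $\cl(K)$; in particular $\psi$ is unramified, and since $N\equiv 3,7\cmod 8$ makes $2$ inert or split in $K$ (so $K/\QQ$ is unramified at $2$), the representation $\rho_\mm$ is unramified at $2$. Reading off Table~\ref{ordersparity}, the local representation $\rho_\mm|_{G_{\QQ_2}}$ is the nonsplit unramified self-extension of the trivial character when $2$ is inert, and the split sum $\psi_\pp\oplus\psi_{\pp'}$ of two unramified characters with mutually inverse Frobenius eigenvalues $\mu,\mu^{-1}$ (where $\mu=\psi(\pp)$) when $2$ splits. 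Because $K$ is imaginary, $\theta_\psi\in S_1(\Gamma_1(N),\eps_K)$ is a holomorphic weight-one newform with $\rho_{\theta_\psi}=\rho_\mm$, where $\eps_K$ is the quadratic character of $K/\QQ$.

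The first of the two dimensions is then immediate. As $\eps_K\equiv 1\cmod 2$ and $N$ is prime (so every form is new), the reduction $\bar\theta_\psi$ is a nonzero Katz weight-one cusp form over $\bar\FF_2$ of level $\Gamma_0(N)$ with eigensystem $\mm$; multiplying by the Hasse invariant $A$ (of weight one and $q$-expansion $1$) yields $A\,\bar\theta_\psi\in S_2(N)$ with the same $q$-expansion, hence a nonzero vector of $S_2(N)[\mm]\subseteq S_2(N)_\mm$. The crux is a second, independent realization. The point is that $\mm$, being generated by the $T_\ell$ with $\ell\nmid 2N$, does not pin down the behavior at $2$: by Theorem~\ref{delignefontaine}\eqref{dford} the associated $a_2(\mm)$ is only one eigenvalue of $\rho_\mm(\frob_2)$, so the two choices $\mu,\mu^{-1}$ in the split case (respectively the nontrivial extension class in the inert case) should furnish two distinct weight-two realizations of the \emph{same} prime-to-$2$ system. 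I would make this rigorous by analyzing the action of $T_2$ on $S_2(N)_\mm$ and showing that in each case it acts either with two distinct eigenvalues $\mu\ne\mu^{-1}$ or with a nonzero nilpotent, so that $\dim S_2(N)_\mm\ge 2$.

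The main obstacle is precisely this second step—establishing the failure of multiplicity one in characteristic two. For odd $\ell$ one expects multiplicity one (by $\ell$-distinguishedness), so the doubling is a genuinely $2$-adic effect, and the naive local criteria at $2$ (or the Steinberg-type ramification at $N$) are delicate to apply. I expect the cleanest route to be a completed-cohomology argument in the style of Calegari--Emerton \cite{calegari-emerton-eisenstein}, isolating the $\mm$-part of weight-two cohomology and matching it to the weight-one contribution governed by $\theta_\psi$, which the structure at $2$ doubles; a concrete alternative is to compute $\dim_{\bar\FF_2}J_0(N)[\mm]$ directly and show it is at least $4$. In either approach the imaginary-quadratic hypothesis enters essentially, through both the holomorphic seed $\theta_\psi$ and the explicit class field theory of the CM field $K$.
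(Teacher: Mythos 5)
Your construction delivers only one of the two required dimensions, and the step you yourself flag as ``the main obstacle'' is exactly where the proposal stops being a proof. The first vector is fine and agrees with the paper: the reduction $\bar\theta_\psi$ is an eigenform with eigensystem $\mm$, and multiplying by the Hasse invariant (the paper instead multiplies by the Eisenstein series $E_{1,\eps_K}$, a characteristic-zero lift of the Hasse invariant, which keeps one among classical forms and, $\eps_K$ being quadratic, lands in trivial nebentypus) gives a nonzero element of $S_2(N)[\mm]$. But for the second dimension you offer only a program: analyze $T_2$ on $S_2(N)_\mm$ and show it has two distinct eigenvalues $\mu \neq \mu^{-1}$ or a nonzero nilpotent, via completed cohomology \`a la Calegari--Emerton or a computation of $\dim J_0(N)[\mm]$. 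As stated the $T_2$-analysis is circular --- to exhibit two eigenvalues or a nontrivial nilpotent of $T_2$ on $S_2(N)_\mm$ you must already know $\dim S_2(N)_\mm \geq 2$, which is the statement being proved --- and neither of the heavier alternatives is carried out, nor is it indicated what input would make them work. Your local heuristic (the two unramified characters at $2$, or the nonsplit self-extension in the inert case) is the correct intuition for \emph{why} doubling happens, but it is not a mechanism that produces a second form.

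The paper closes this gap with an elementary device absent from your proposal: in characteristic $2$ there is a \emph{second} Hecke-compatible map from weight one to weight two, namely squaring, $f \mapsto f^2$, which doubles the weight and satisfies $T_\ell(f^2) = (T_\ell f)^2$ for $\ell \nmid 2N$, so it carries the $\psi$-eigenline to the $\psi^2$-eigenline; since $\psi$ has odd order, $\psi \mapsto \psi^2$ permutes the dihedral characters of $\cl(\KNm)$, so every ordinary $\KNm$-dihedral $\mm$ receives both $E_{1,\eps}\bar\theta_{\psi_\mm}$ and $\bar\theta_{\psi'}^{\,2}$ with $(\psi')^2 = \psi_\mm$ up to inversion. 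The linear independence of the images of the two maps is not reproved but quoted from Edixhoven \cite[Prop.~4.4]{edix2}, and the oddness of $h(-N)$ guarantees that the $\frac{h(-N)-1}{2}$ weight-one dihedral lines reduce to pairwise distinct mod-$2$ eigensystems, so the doubling applies eigenspace by eigenspace. In short: you have the right seed ($\theta_\psi$ and the Hasse invariant) but are missing the squaring map plus Edixhoven's independence result, and without them --- or a genuinely executed multiplicity argument in the style of \cite{calegari-emerton-eisenstein} --- the bound $\dim S_2(N)_\mm \geq 2$ is not established.
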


\begin{proof}
If $K = \KNm$, and $N \equiv 3 \cmod{4}$ is a prime, and $\eps = \eps_K$, then there are exactly $\frac{h(-N) - 1}{2}$ distinct $K$-dihedral forms in $S_1(N, \eps, \CC)$ corresponding to inductions of characters $\psi: \Gal(H(K)/K) \to \CC^\times$ (see, for example, \cite[\S 8.1.I]{serre:weight1} for details). Since $h(-N)$ is odd, all of these reduce to distinct representations modulo $2$, so that $S_1(N, \eps_K, \bar\FF_2)^{K\myhyphen{\rm dih}}$ splits as a Hecke module into a direct sum of $\frac{h(-N) - 1}{2}$ non-isomorphic one-dimensional lines spanned by ordinary forms. The two maps $S_1(\Gamma_1(N), \FF_2) \into S_2(\Gamma_1(N), \FF_2)$ given by $f \mapsto f^2$ and $f \mapsto E_{1, \eps} f$ preserve Hecke eigenspaces (the former because we are in characteristic~$2$; the latter because $E_{1, \eps}$ in characteristic zero lifts the Hasse invariant: see user Electric Penguin's answer to \href{https://mathoverflow.net/a/228596/86179}{MathOverflow question 228497}\footnote{\url{https://mathoverflow.net/questions/228497}}) and are linearly independent \cite[Prop. 4.4]{edix2}. Since $\eps$ is quadratic, we obtain a Hecke equivariant embedding $\big(S_1(N, \eps, \bar\FF_2)^{K\myhyphen{\rm dih}} \big)^{2} \into S_2(N, \bar \FF_2)$ that doubles the eigenspace. 
\end{proof}

\section{Comparison with experimental results}
\label{conclusions}

To conclude, we compare our results to the empirical assertions about the mod-$2$ reduction of $T_2$ acting on $S_2(\Gamma_0(N), \QQ)$ for $N$ prime from the introduction.

\begin{itemize}
\item For $N \equiv 3 \cmod{8}$, the eigenvalue $0$ always occurs if $N > 3$.
\item For $N \equiv 1,3,5 \cmod{8}$, the eigenvalue 1 always occurs if $N > 163$.
\item For $N \equiv 1 \cmod{8}$, the eigenvalue $0$ occurs with probability $16.8\%$.
\item For $N \equiv 5 \cmod{8}$, the eigenvalue $0$ occurs with probability $42.2\%$.
\item For $N \equiv 7 \cmod{8}$, the eigenvalue $0$ occurs with probability $17.3\%$.
\item For $N \equiv 7 \cmod{8}$, the eigenvalue $1$ occurs with probability $47.9\%$.
\end{itemize}
Of these, the first assertion is implied by part (\ref{countss3}) of Theorem~\ref{mf2}
and the second assertion is implied by part (\ref{countord}) of Theorem~\ref{mf2}. Combining the other parts
of Theorem~\ref{mf2} with the Cohen-Lenstra heuristics yields the following statements.
\begin{itemize}
\item For $N \equiv 5 \cmod{8}$, the eigenvalue $0$ occurs for ``dihedral reasons'' when $u(N) \equiv 1 \cmod{2\OO(N)}$. The three possible nonzero reductions of $u(N)$ mod $2\OO(N)$ being equally likely, this should occur with probability $\frac{1}{3} = 33.3\%$.

\item For $N \equiv 7 \cmod{8}$, the eigenvalue $1$ occurs for ``dihedral reasons'' when
$h(N) > 1$ or $h(-N)^{\odd, \twosplit} > 1$.
Each of these is modeled by the probability that a random finite abelian group, modulo the subgroup generated by a random element, yields a nontrivial quotient; this probability is
\[
1 - \prod_{p>2} \prod_{j=1}^\infty \left( 1 - \frac{1}{p^{j+1}} \right) = 0.2455\dots.
\]
Since the two events are presumed to be independent, at least one should occur with probability $43.1\%$.
\end{itemize}

Removing these cases leaves the following occurrence of eigenvalues arising from exceptional or big-image maximal ideals.
\begin{itemize}
\item For $N \equiv 1 \cmod{8}$, the eigenvalue $0$ occurs with probability $16.8\%$.
\item For $N \equiv 5 \cmod{8}$, the eigenvalue $0$ occurs with probability $13.3\%$.
\item For $N \equiv 7 \cmod{8}$, the eigenvalue $0$ occurs with probability $17.3\%$.
\item For $N \equiv 7 \cmod{8}$, the eigenvalue $1$ occurs with probability $8.4\%$.
\end{itemize}
It would of course be desirable to explain these probabilities also. This will require combining some analysis of the corresponding representations with Wood's nonabelian analogue of the Cohen-Lenstra heuristics \cite{wood}, which for a given pair of finite groups $G,G'$ predicts the probability that a quadratic number field $K$ admits a Galois $G$-extension $L$ for which $L/\QQ$ is a Galois $G'$-extension. 

For $N < 200000$ prime, we also checked whether Theorem~\ref{mf2} and Conjecture~\ref{mfmult} 
together give a sharp lower bound on the eigenvalue multiplicities of 0 and 1. For each residue mod 8, the percentage of cases where this fails
is shown in Table~\ref{multiplicity results}.

\begin{table}[ht]
\caption{Frequency of unexplained eigenvalue multiplicity in the mod-2 reduction of $T_2$ on $S_2(\Gamma_0(N), \QQ)$ for $N < 200000$ prime.}
\begin{tabular}{c|c|c}
$N$ mod $8$ & excess multiplicity of 0 & excess multiplicity of 1 \\
\hline
1 & 16.4\% & 43.8\% \\
3 & 53.0\% & 45.7\% \\
5 & 22.5\% & 45.8\% \\
7 & 17.3\% & 39.0\% \\
\end{tabular}
\label{multiplicity results}
\end{table}
Note that these percentages include both uncounted (exceptional or big-image) maximal ideals and non-sharpness in Conjecture~\ref{mfmult}. The preceding calculation suggests that excess multiplicity of 0 for $N \equiv 1,7 \cmod{8}$ arises almost entirely from uncounted maximal ideals, but in other cases Conjecture~\ref{mfmult} may need to be refined.

\end{document}